\newcommand\ceil[1]{\lceil #1 \rceil}
\newcommand\M{\mathcal{M}}
\renewcommand{\vec}[1]{\boldsymbol{\mathbf{#1}}}
\newcommand\peel{$\mathcal{M}^{\mathrm{Peel}}$\xspace}
\newcommand\oneshot{\mathcal{M}^{\mathrm{os}}}
\newcommand\bhc{\alpha^{\textnormal{\tiny BH}}}
\newcommand\rhs{RHS\xspace}
\newcommand\iid{i.\,i.\,d.\xspace}
\newcommand{\R}{\mathbb{R}}
\newcommand{\ep}[1]{\ifthenelse{\equal{#1}{1}}{\mathrm{e}}{\mathrm{e}^{#1}}}
\def\prob{\mathbb{P}}
\let\P\prob
\def\lap{\operatorname{Lap}}
\def\argmin{\operatorname{argmin}}
\def\fdr{\operatorname{FDR}}
\def\fdp{\operatorname{FDP}}
\let\e\varepsilon
\renewcommand\epsilon{\varepsilon}
\def\d{\mathrm{d}}
\newcommand{\remove}[1]{}
\def\RNMa{Report Noisy Max\xspace}
\def\RNMi{Report Noisy Min\xspace}
\def\BHq{BHq\xspace}
\def\BHsmall{BHq$_{\textnormal{sd}}$\xspace}
\def\dba{D}
\def\dbb{D'}
\def\dbx{x}
\def\dby{y}
\def\eps{\varepsilon}
\newcommand{\E}{\mathbb{E}}
\def\wO{\widetilde{O}}
\newcommand\dbax{D}
\newcommand\dbby{D'}
\newtheorem{theorem}{Theorem}
\newtheorem{othertheorem}{othertheorem}[section]
\newtheorem{lemma}[othertheorem]{Lemma}
\newtheorem{corollary}[othertheorem]{Corollary}
\theoremstyle{definition}
\newtheorem{definition}[othertheorem]{Definition}
\theoremstyle{remark}
\theoremstyle{assumption}
\numberwithin{equation}{section}
\title{Private False Discovery Rate Control}
\author{Cynthia Dwork\textsuperscript{$\ast$} ~\and ~ Weijie Su\textsuperscript{$\dagger$} ~ \and ~ Li Zhang\textsuperscript{$\ddagger$}}
\begin{document}
\maketitle

{\centering
\vspace*{-0.3cm} 
\textsuperscript{$\ast$} Microsoft Research, Mountain View, CA 94043, USA\\
\textsuperscript{$\dagger$} Department of Statistics, Stanford University, Stanford, CA 94305, USA\\
\textsuperscript{$\ddagger$} Google Inc., Mountain View, CA 94043, USA\\
\par\bigskip 
}

\begin{abstract}

We provide the first differentially private algorithms for controlling the false discovery rate (FDR) in multiple hypothesis testing, with essentially no loss in power under certain conditions.  Our general approach is to adapt a well-known variant of the Benjamini-Hochberg procedure (\BHq{}), making each step differentially private. This destroys the classical proof of FDR control. To prove FDR control of our method we
\begin{enumerate}
\item Develop a new proof of the original (non-private) BHq algorithm and its robust variants -- a proof requiring only the assumption that the true null test statistics are independent, allowing for arbitrary correlations between the true nulls and false nulls. This assumption is fairly weak compared to those previously shown in the vast literature on this topic, and explains in part the empirical robustness of BHq.
\item
Relate the FDR control properties of the differentially private version to the control properties of the non-private version.
\end{enumerate}
We also present a low-distortion ``one-shot'' differentially private primitive for ``top $k$'' problems, e.g., ``Which are the $k$ most popular hobbies?'' (which we apply to: ``Which hypotheses have the $k$ most significant $p$-values?''), and use it to get a faster privacy-preserving instantiation of our general approach at little cost in accuracy.  The proof of privacy for the one-shot top~$k$ algorithm introduces a new technique of independent interest. 
\end{abstract}

\section{Introduction}
\label{sec:intro}

{\em Hypothesis testing} is the use of a statistical procedure to assess the validity of a given hypothesis, for example, that a given treatment for a condition is more effective than a placebo. The traditional approach to hypothesis testing is to (1) formulate a pair of {\em null} (the treatment and the placebo are equally effective) and {\em alternative} (the treatment is more effective than the placebo) hypotheses; (2) devise a test statistic for the null hypothesis, (3) collect data; (4) compute the {\em $p$-value} (the probability of observing an effect as or more extreme than the observed effect, were the null hypothesis true; smaller $p$-values are ``better evidence'' for rejecting the null); and (5) compare the $p$-value to a standard threshold $\alpha$ to determine whether to {\em accept} the null hypothesis (conclude there is no interesting effect) or to {\em reject} the null hypothesis in favor of the alternative hypothesis.

In the {\em multiple hypothesis testing} problem, also known as the {\em multiple comparisons} problem, $p$-values are computed for multiple hypotheses, leading to the problem of false discovery: since the $p$-values are typically defined to be uniform in $(0,1)$ for true nulls, 
by definition we expect an $\alpha$ fraction of the true nulls to have $p$-values bounded above by~$\alpha$.  

Multiple hypothesis testing is an enormous problem in practice; for example, in a single genome-wide association study a million SNPs\footnote{A {\em single nucleotide polymorphism}, or SNP, is a location in the DNA in which there is variation among individuals.} may be tested for an association with a given condition.  Accordingly, there is a vast literature on the problem of controlling the {\em false discovery rate} (FDR), which, roughly speaking, is the expected fraction of erroneously rejected hypotheses among all the rejected hypotheses, where the expectation is over the choice of the data and any randomness in the algorithm (see below for a formal definition).  

The seminal work of Benjamini and Hochberg~\cite{BenjaminiH95} and their beautiful ``BHq'' procedure (Algorithm~\ref{algo:originalbh} below) is our starting point. Assuming independence or certain positive correlations of $p$-values \cite{prds}, this procedure controls FDR at any given nominal level $q$.  An extensive literature explores the control capabilities and power of this procedure and its many variants\footnote{The power of a procedure is its ability to recognize ``false nulls'' (what a lay person may describe as ``true positives'').}.

%
{\em Differential privacy}~\cite{DworkMNS06} is a definition of privacy tailored to statistical data analysis. The goal in a differentially private algorithm is to hide the presence
or absence of any individual or small group of individuals, the
intuition being that an adversary unable to tell whether or not a
given individual is even a member of the dataset surely cannot glean
information specific to this individual.  To this end, a pair of
databases $\dbx,\dby$ are said to be {\em adjacent} (or {\em
neighbors}) if they differ in the data of just one individual.
Then a differentially private algorithm
(Definition~\ref{def:dp}) ensures that the behavior on adjacent
databases is statistically close, so it is difficult to infer if any particular individual is in the database from the output of the algorithm and any other side information.

\remove{
$(\eps,0)$-differentially private algorithms
(Definition~\ref{def:dp}) ensure that the behavior on adjacent
databases is statistically close: the ratio of the probabilities of
any given event on neighboring databases $\dbx,\dby$ is bounded
by~$\ep\eps$.  Very roughly speaking, the more the data of one
individual can change the output of an algorithm (in the worst case),
the more difficult it is to hide the difference between adjacent
databases.}

\paragraph{Contributions.} 
We provide the first differentially private algorithms for controlling the FDR in multiple hypothesis testing.  The problem is difficult because the data of a single individual can affect the $p$-values of all hypotheses simultaneously. 
Our general approach is to adapt the Benjamini-Hochberg step-down procedure, making each step differentially private. This destroys the classical proof of FDR control. To prove FDR control of our method we 
\begin{enumerate}
\item Develop a new proof of the original (non-private) BHq algorithm and its robust variants -- a proof requiring a fairly weak assumption compared to those previously shown in the vast literature on this topic -- and
\item
Relate the FDR control properties of the differentially private version to the control properties of the non-private version. Power is also argued in relation to the non-private version.
\end{enumerate}

Central to BHq and its variants is the procedure for reporting the experiments with $k$ most significant $p$-values, or known as the top-$k$ problem. To our knowledge, the most accurate approximately private top-$k$ algorithm is a ``peeling'' procedure, in which one runs a differentially private maximum procedure $k$ times, tuning the ``inaccuracy'' to roughly $\sqrt{k}/\eps$.  Here we present a low-distortion ``one-shot'' differentially private primitive for ``top $k$'' with the same dependence on~$k$, and use it to get a faster privacy-preserving instantiation of our general approach.\footnote{For pure differential privacy, previous work (e.g.~\cite{bhaskar2010kdd}) provides a way to avoid peeling by adding Laplace noise of scale $O(k)$. Conceivably, it is much more challenging to get the dependence on $k$ down to $\sqrt{k}$ for approximate differential privacy, which is what we show in this paper.} The proof of differential privacy for our one-shot top~$k$ algorithm introduces a new technique of independent interest.

Finally, traditional methods of computation of a $p$-value are not typically differentially private.  Our privacy-preserving algorithms for control of the FDR require the $p$-value computation to satisfy a technical condition.  We provide such a method for computing $p$-values, ensuring ``end to end'' differential privacy from the raw data to the set of rejected hypotheses.

\remove{
{\em ``One-Shot'':} A generalization of the \RNMa{} procedure described above that allows us to retrieve the indices of the approximately $k$ largest computations while paying an accurate price of only $\tilde{O}(\sqrt{k})$ (rather than the more obvious price of $O(k)$). Previous algorithms achieving comparable bounds required $k$ iterations, during each of which the top (remaining) element was identified and removed from further consideration.  Our algorithm operates in one shot, just like \RNMa .  \cnote{What about the tree?}\linote{I mentioned it in footnote 3 and also in the description of one-shot.}
The proof of privacy introduces a new technique of independent interest. 
}

\subsection{Description of Our Approach}
\paragraph{The \BHq{} procedure.}  Suppose we are simultaneously testing $m$ null hypotheses $H_1, \ldots, H_m$ and $p_1, \ldots, p_m$ are their corresponding $p$-values. Denote by $R$ the number of rejections (discoveries) made by any procedure and $V$ the number of true null hypotheses that are falsely rejected (false
discoveries). The FDR is defined as
\[
\fdr := \E \left[ \frac{V}{\max\{R, 1\}} \right] \,.
\]

Algorithm~\ref{algo:originalbh} presents the original \BHq procedure for controlling FDR at~$q$. The thresholds $\bhc_j = qj/m$ for $1 \le j \le m$, are known as the {\em BHq critical values}.  The following intuition may demystify the \BHq{} algorithm. In the case that all $m$ null hypotheses are true and their $p$-values are \iid uniform on $(0, 1)$, then we expect, approximately, a $qj/m$ fraction of the $p$-values to
lie in the interval $[0, qj/m]$.  If instead there are at least $j$ many $p$-values in this interval (this is precisely what the condition $p_{(j)} \le qj/m$ says), then there are ``too many'' $p$-values in $[0,p_{(j)}]$ for all of them to correspond to null hypotheses: We have $j$ such $p$-values, and should attribute no more than $jq$ of these to the true nulls; so if we reject all these $j$ hypotheses we would expect that at most a $q$ fraction correspond to true nulls.

\begin{algorithm}[ht]
\caption{Step-up BHq procedure}\label{algo:originalbh}
\begin{algorithmic}[1]
\REQUIRE Level $0<q<1$ and $p$-values $p_1, \ldots, p_m$ of hypotheses, respectively, $H_1, \ldots, H_m$
\ENSURE a set of rejected hypotheses
\STATE sort the $p$-values in increasing order: $p_{(1)}\leq p_{(2)}\leq \cdots\leq p_{(m)}$
\FOR{$j=m$ to $1$}
\IF {$p_{(j)} >  qj/m$}
\STATE continue
\ELSE
\STATE reject $H_{(1)}, \ldots, H_{(j)}$ and halt
\ENDIF
\ENDFOR
\end{algorithmic}
\end{algorithm}

\paragraph{Making \BHq{} Private.} There is an extensive literature on, and burgeoning practice of, differential privacy.  For now, we need only two facts: (1) Differential privacy is closed under composition, permitting us to bound the cumulative privacy loss over multiple differentially private computations.  This permits us to build complex differentially private algorithms from simple differentially private primitives; and (2) we will make use of the well known \RNMa{} (respectively, \RNMi ) primitive, in which appropriately distributed fresh random noise is added to the result of each of $m$ computations, and the index of the computation yielding the maximum (respectively, minimum) noisy value is returned. By returning only one index the procedure allows us to pay an accuracy price for a single computation, rather than for all~$m$.\footnote{The variance of the noise distribution depends on the maximum amount that any single datum can (additively) change the outcome of a computation and the inverse of the privacy price one is willing to pay.}

A natural approach to making the \BHq{} procedure differentially private is by repeated use of the \RNMa{} primitive: Starting with $j=m$ and decreasing: use \RNMa to find the hypothesis $H_{j}$ with the (approximately) largest $p$-value; estimate that $p$-value and, if the estimate is above (an appropriately more conservative critical value) $\alpha_j < \bhc_j$, accept $H_j$, remove $H_j$ from consideration, and repeat. Once an $H_j$ is found whose $p$-value is below the threshold, reject all the remaining hypotheses.  The principal difficulty with this approach is that every iteration of the algorithm incurs a privacy loss which can only be mitigated by increasing the magnitude of the noise used by \RNMa. Since each iteration corresponds to the acceptance of a null hypothesis, the procedure is paying in privacy/accuracy precisely for the null hypotheses accepted, which are by definition not the ``interesting'' ones.  

If, instead of starting with the largest $p$-value and considering the values in decreasing order, we were to start with the smallest $p$-value and consider the values in increasing order, rejecting hypotheses one by one until we find a $j \in [m]$ such that $p_{(j)} > \alpha_j$, the ``demystification'' intuition still applies. This widely-studied variation is called the Benjamini-Hochberg {\em step-down} procedure, and hereinafter we call it the step-down BHq (the original BHq is referred to as the step-up BHq in contrast).  Their definitions reveal that the step-down variant shall be more conservative than the other. However, as shown in \cite{stepdown}, the step-down BHq can assume less stringent critical values than the BHq critical values while still controls FDR, often allowing more discoveries than the step-up \BHq. In a different direction, \cite{abdj} establishes that under a weak assumption on the sparsity the difference between these two BHq's is asymptotically negligible and both procedures are optimal for Gaussian sequence estimation over a wide range of sparsity classes. This remarkable result implies that, as a pure testing criterion, the FDR concept is fundamentally correct for estimation problems.

\begin{algorithm}[!htp]
\caption{Step-down BHq procedure}\label{algo:smallbh}
\begin{algorithmic}[1]
\REQUIRE $0<q<1$ and $p$-values $p_1, \ldots, p_m$ of hypotheses, respectively, $H_1, \ldots, H_m$
\ENSURE a set of rejected hypotheses
\STATE sort the $p$-values in increasing order: $p_{(1)}\leq p_{(2)} \le \cdots \leq p_{(m)}$
\FOR{$j=1$ to $m$}
\IF {$p_{(j)} \le qj/m$}
  \STATE reject $H_{(j)}$
\ELSE
  \STATE halt 
\ENDIF
\ENDFOR
\end{algorithmic}
\end{algorithm}


If we make the natural modifications to the step-down BHq (using \RNMi now,
instead of \RNMa), then we pay a privacy cost only for nulls rejected
in favor of the corresponding alternative hypotheses, which by
definition are the ``interesting'' ones.  Since the driving
application of \BHq{} is to select promising directions for future
investigation that have a decent chance of panning out, we can view
its outcome as advice for allocating resources. Thus, a procedure that
finds a relatively small number~$k$ of high-quality hypotheses, still achieving FDR control, may be as useful as a procedure that finds a much larger set.


\paragraph{Proving FDR Control.}
A key property of the two BHq procedures is as follows: if $R$ rejections are made, the maximum $p$-value of all the rejected hypotheses is bounded above by $qR/m$. This motivates us to formulate the definition below.
\begin{definition}\label{def:adaptive}
Given any critical values $\{ \alpha_j\}_{j=1}^m$, a multiple testing procedure is said to be {\em adaptive} to $\{\alpha_j\}_{j=1}^m$, if either it rejects none or the rejected $p$-values are all bounded above by $\alpha_R$, where the number of rejections $R$ can be arbitrary.
\end{definition}

In this paper, we often work with the sequence $\{ \alpha_j\}_{j=1}^m$ set to be the BHq critical values $\{\bhc_j\}_{j=1}^m$. As motivating examples, both BHq's in addition obey the property that the rejected hypotheses are contiguous in sorted order; for example, it is impossible that the hypotheses with the smallest and third smallest
$p$-values are rejected while the hypothesis with the second smallest $p$-value is accepted. Nevertheless, in general an adaptive procedure does not necessarily have property as it may skip some smallest $p$-values. For protecting differential privacy, this relaxation is desirable because differentially private algorithms may not reject consecutive minimum $p$-values due to artificial noise introduced. Note that this perfectly matches our ``demystification'' intuition. 

Remarkably, an adaptive procedure with respect to the BHq critical values still controls the FDR. This result also applies to a generalized FDR defined for any positive integer $k$ \cite{kfdr,kfdrfollow}:
\[
\fdr_k := \E \left[ \frac{V}{R}; V \ge k \right].
\]
Built on top of the original FDR, this generalization does not penalize if fewer than $k$ false discoveries are made. Note that it reduces to the original FDR in the case of $k = 1$.
\begin{theorem}\label{thm:robustfdr}
Assume that the test statistics corresponding to the true null
hypotheses are jointly independent. Then any procedure adaptive to the \BHq
critical values $\bhc_j = qj/m$ obeys
\begin{equation}\label{eq:fdr2}
\begin{aligned}
&\fdr \leq q\log(1/q) + Cq\,,\\
&\fdr_2 \leq Cq\,,\\
&\fdr_k \le \left( 1 + 2/\sqrt{qk} \right) q\,,
\end{aligned}
\end{equation}
where $C < 3$ is a universal constant.

\end{theorem}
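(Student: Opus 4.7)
The plan is to dominate $V/\max(R,1)$ pointwise by a maximal functional of the empirical process of the true-null $p$-values, and then control that functional via a Simes-type inequality exploiting the independence of the nulls. Let $\mathcal{H}_0\subseteq[m]$ be the set of true-null indices, $m_0=|\mathcal{H}_0|$, and introduce
$$V(t) := |\{i\in\mathcal{H}_0:\,p_i\le t\}|, \qquad M(t) := \frac{V(t)}{m_0 t},$$
together with the order statistics $p_{(1):0}\le p_{(2):0}\le\cdots$ of the null $p$-values. By Definition~\ref{def:adaptive} every rejected $p$-value lies in $[0,\bhc_R]=[0,qR/m]$; in particular every rejected null does, so $V\le V(qR/m)$, and hence on $\{R\ge 1\}$
$$\frac{V}{\max(R,1)}\;\le\;\frac{V(qR/m)}{R}\;\le\;\max_{1\le r\le m}\frac{V(qr/m)}{r}\;=\;\frac{m_0 q}{m}\max_{1\le r\le m} M(qr/m).$$

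Under the independence hypothesis the null $p$-values are independent and each dominated by $\mathrm{Uniform}(0,1)$, so (in the worst case) $M$ is a backward martingale with mean one; Simes' inequality, equivalently Doob's maximal inequality for $M$, then yields $\P\!\left(\sup_{t>0}M(t)>\lambda\right)\le 1/\lambda$ for all $\lambda\ge 1$. I would use this to attack the three bounds separately. For $\fdr$: taking expectations in the display above and integrating the Simes tail by layer-cake, split between a bulk regime $r\gtrsim 1/q$ (where the $1/\lambda$ tail controls the contribution) and a sparse regime of very small $r$ (where $V(qr/m)=0$ with probability at least $1-m_0 qr/m$), yields the stated $q\log(1/q)+Cq$.

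For $\fdr_2$: on $\{V\ge 2\}$ the second smallest rejected null lies in $[p_{(2):0},qR/m]$, so $R\ge m p_{(2):0}/q$ and $V/R\le qV(q)/(m p_{(2):0})$. Conditioning on $V(q)=n$ and using that $p_{(2):0}/q\mid V(q)=n\sim\mathrm{Beta}(2,n-1)$ with $\E[1/p_{(2):0}\mid V(q)=n]=n/q$, a direct second-moment computation together with $\E V(q)^2\le m_0 q+(m_0 q)^2$ eliminates the logarithm and produces $\fdr_2\le Cq$. For general $\fdr_k$: on $\{V\ge k\}$ we have $R\ge k$, and $V(qR/m)/R$ has mean $m_0 q/m\le q$ with variance $\le q/R\le q/k$; applying Cauchy--Schwarz (or Chebyshev) to $\E[(V(qR/m)/R-q)_+\,;\,V\ge k]$ then yields the $2q/\sqrt{qk}$ correction.

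The main obstacle I expect is upgrading the naive $\log(m/q)$ bound that comes directly from Simes' tail into the sharper $\log(1/q)$ in the first inequality: brute layer-cake integration of $1/\lambda$ against the deterministic ceiling $M(t)\le 1/(m_0 t)$ over $\lambda\in[1,m/q]$ only saves $\log(m/q)$. Recovering the missing $\log m$ factor requires playing the martingale tail against the sparsity estimate $\P(V(t)\ge 1)\le m_0 t$, which forces the small-$r$ contribution to scale like $O(q)$ rather than $O(q\log m)$ because the event $M(qr/m)\ne 0$ itself has probability at most $qrm_0/m$. Tracking both tails tightly enough to attain the constant $C<3$ will be the most delicate piece of bookkeeping.
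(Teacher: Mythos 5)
Your reduction step is correct and matches the paper's: bounding $\fdp$ by the maximal functional $\max_{1\le r\le m} V(qr/m)/r$ is exactly the paper's inequality~\eqref{eq:fdp_main}, reparametrized by the total rejection count $r$ instead of the null rejection count $j$. Your route to the $\fdr$ bound is also valid and, if anything, cleaner than the paper's: $M(t)=V(t)/(m_0t)$ is indeed a nonnegative backward martingale, Ville's/Simes' inequality gives $\P(\sup_t M(t)\ge\lambda)\le 1/\lambda$, and the layer-cake integral against the ceiling $m/(m_0 q)$ yields $\frac{m_0 q}{m}\bigl(1+\log\frac{m}{m_0q}\bigr)\le q\log(1/q)+q$. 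The paper instead peels off the $j=1$ term (Lemma~\ref{lm:fdr_1_term}) and handles $j\ge 2$ via the $L\log L$ maximal inequality, so your single-shot integration is a genuine simplification for $\fdr_1$.

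The $\fdr_2$ and $\fdr_k$ plans, however, have real gaps. For $\fdr_2$ you replace $\max_{2\le j}\frac{qj}{mp^0_{(j)}}$ by $\frac{qV(q)}{mp^0_{(2)}}$, i.e.\ you take the numerator from the largest admissible $j$ and the denominator from $j=2$ simultaneously. This decoupling is too lossy: your own second-moment computation gives
\[
\E\Bigl[\tfrac{qV(q)}{mp^0_{(2)}}\Bigr]=\frac{\E V(q)^2}{m}\ge\frac{(m_0q)^2}{m},
\]
which blows up like $m q^2$ when $m_0=m$ and $mq\gg 1$, and truncating at $1$ does not rescue it (for $m_0q$ large the truncated quantity is close to $1$ with high probability, not $O(q)$). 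The paper's bound keeps the coupling: it controls $\E\bigl[\max_{2\le j}\frac{qj}{mU_{(j)}}\bigr]$ directly by showing $W_j=jT_{m+1}/T_j$ is a backward \emph{sub}martingale (Lemma~\ref{lm:submartingale}, via Jensen applied to the backward martingale $W_j^{-1}$) and invoking Doob's $L\log L$ maximal inequality; that is the step your argument is missing. For $\fdr_k$, the statement that ``$V(qR/m)/R$ has mean $q$ and variance $\le q/R$'' is only valid for a fixed, non-random $r$; the rejection count $R$ is adaptive, and Chebyshev/Cauchy--Schwarz applied at the random time $R$ is not justified. One again needs a maximal inequality over $r\ge k$: the paper shows $W_r=V(qr/m)/r$ is a backward martingale, so $(W_r-q)_+$ is a backward submartingale, and Doob's $L^2$ maximal inequality gives $\E\bigl[\max_{k\le r\le m}(W_r-q)_+\bigr]^2\le 4\E(W_k-q)_+^2\le 4q/k$, whence the $2\sqrt{q/k}$ correction by Jensen. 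Without that maximal-inequality step, both of your latter two bounds do not close.
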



One novelty of Theorem~\ref{thm:robustfdr} lies in the absence of
any assumptions about the relationship between the true null test
statistics and false null test statistics. In the literature, independence, or some (very stringent) kind of positive dependence \cite{prds} between these two groups of test statistics, is necessary for provable FDR control. In a different line, Theorem 1.3 of \cite{prds} controls FDR without any assumptions by using the (very stringent) critical values ${qj}/{(m\sum_{i=1}^m\frac1{i})} \approx {qj}/{(m\log m)}, 1 \le j \le m$, effectively paying a factor of $\log m$, whereas we
pay only a constant factor. This  simple independence
assumption within the true nulls shall also capture more real life scenarios.
As we will see, the additive $q \log (1/q)$ term for the original
FDR, i.e. $\fdr_1$, is unavoidable given so few assumptions. Surprisingly, $\fdr_2$ no longer has this dependency, and $\fdr_k$ even approaches $q$ as $k$ grows.

\remove{
To prove Theorem~\ref{thm:robustfdr} (Sec~\ref{sec:robust}), we
first reduce this problem to studying the key quantity $\E\left[ \max_{2 \le j \le m}
j/\lceil m U_{(j)}/q\rceil \right]$, where $U_{(j)}$ is
the $j$th smallest statistic of $m$ \iid uniform random variables on $(0, 1)$.  If we
express these order statistics in terms of sums of exponentially
distributed random variables, then the key quantity has a nice
martingale property, and we use known techniques to bound its
expectation. }

Theorem~\ref{thm:robustfdr} is the key to proving FDR control even if
the procedure is given ``noisy'' versions of the $p$-values, as
happens in our differentially private algorithms.  To determine how to
add noise to ensure privacy, we study the {\em sensitivity} of a
$p$-value, a measure of how much the $p$-value can change between
adjacent datasets
(Section~\ref{sec:prelim:sensitivity})\footnote{Recall that adjacent
datasets differ in the data of just one person.}. For standard
statistical tests this change is best measured multiplicatively,
rather than additively, as is typically studied in differential
privacy.  Exploiting multiplicative sensitivity is helpful when the
values involved are very small. Since we are interested in the regime
where $m$, the number of hypotheses, is much larger than the number of
discoveries, the $p$-values we are interested in are quite small: the
$j$th BHq critical value is only $qj/m$. We remark that adapting algorithms
such as \RNMi{} and the one-shot top~$k$ to incorporate multiplicative
sensitivity can be achieved by working with the logarithms of the
values involved.

Informally, we say a $p$-value is $\eta$-multiplicatively sensitive truncated at $\nu$ if
for any two neighboring databases $D$ and $D'$, the $p$-value computed
on them are within multiplicative factor of $\ep\eta$ of each other, unless they are smaller than $\nu$ (See Definition~\ref{def:multi_sensi}). The justification of the multiplicative sensitivity is provided by the definition of some standard $p$-values under independent bounded statistics.  Indeed as we show in Section~\ref{sec:prelim:sensitivity}, such $p$-values are $\wO(1/\sqrt{n})$ multiplicatively sensitive\footnote{We use $\wO$ to hide polynomial factors in $\log (m/\delta)/\e$.}. With these results, we can show that our algorithm controls FDR under the bound given in Theorem~\ref{thm:robustfdr} while having the comparable power to the \BHsmall$(q)$.
\begin{theorem}[informal]\label{thm:PrivateFDRInformal}
Suppose $\sqrt{k}\eta/\epsilon = \tilde o(1)$. Given any nominal level $0 < q <1$, the differentially private \BHq{} algorithms controls the FDR at level $q + o(1)$. 
\end{theorem}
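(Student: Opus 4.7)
The plan is to reduce the private FDR analysis to Theorem~\ref{thm:robustfdr} by showing that, with high probability, the private step-down procedure is adaptive (in the sense of Definition~\ref{def:adaptive}) to the \BHq{} critical values. The combination of $\eta$-multiplicative sensitivity of the input $p$-values with the $\wO(\sqrt{k}/\epsilon)$ distortion of the one-shot top-$k$ primitive, applied in log-space, implies that all noisy $p$-values $\widetilde{p}_i$ observed by the algorithm satisfy $\widetilde{p}_i \in [p_i\,\mathrm{e}^{-\gamma},\, p_i\,\mathrm{e}^{\gamma}]$ simultaneously with probability at least $1-\delta$, where $\gamma = \wO(\sqrt{k}\,\eta/\epsilon)$ and $\delta = o(1)$. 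Under the theorem's hypothesis this gives $\gamma = \tilde{o}(1)$, so in particular $\mathrm{e}^{\gamma} = 1+o(1)$.

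Given this noise bound, I would set the algorithm's private critical values to be slightly more conservative than \BHq{}, namely $\widetilde{\alpha}_j := \mathrm{e}^{-2\gamma}\bhc_j$. On the good event above, let $\sigma(j)$ denote the index whose noisy $p$-value ranks $j$-th; for each rank at which the step-down certifies the rejection condition $\widetilde{p}_{(j)} \le \widetilde{\alpha}_j$, the true $p$-value obeys $p_{(\sigma(j))} \le \mathrm{e}^{\gamma}\widetilde{\alpha}_j = \mathrm{e}^{-\gamma}\bhc_j \le \bhc_j$. In particular, for the total number of rejections $R$, every rejected hypothesis has true $p$-value at most $\bhc_R$, so the random procedure is adaptive to $\{\bhc_j\}_{j=1}^m$. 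Theorem~\ref{thm:robustfdr} then yields, conditionally on the good event, $\fdr_k \le (1+2/\sqrt{qk})\,q$ and $\fdr \le q\log(1/q)+Cq$.

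To finish, I would handle the complementary low-probability event by the trivial estimate $V/\max\{R,1\} \le 1$, contributing at most $\delta = o(1)$ to the unconditional FDR. Choosing the rejection budget $k$ to be $\omega(1/q)$ while still compatible with $\sqrt{k}\,\eta/\epsilon = \tilde{o}(1)$ drives $2/\sqrt{qk}$ to $o(1)$, delivering the advertised bound $\fdr \le q+o(1)$ via the $\fdr_k$ estimate. The power claim runs in parallel: since $\mathrm{e}^{-2\gamma}\bhc_j = (1-o(1))\bhc_j$, on the good event the private procedure rejects essentially the same hypotheses as non-private step-down \BHq$(q)$.

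The main obstacle is the coupled calibration of $k$ and $\gamma$. The noise level $\gamma$ grows as $\sqrt{k}$, since that is the accuracy price paid by the one-shot top-$k$ primitive, while the generalized FDR bound improves only as $k$ grows; the two pressures must be balanced so that $\sqrt{k}\,\eta/\epsilon = o(1)$ and $1/\sqrt{qk} = o(1)$ hold simultaneously. The hypothesis $\sqrt{k}\,\eta/\epsilon = \tilde{o}(1)$ opens a non-empty window for such a choice, but verifying that the noisy-ordering shift $\widetilde{\alpha}_j := \mathrm{e}^{-2\gamma}\bhc_j$ preserves the step-down adaptivity requires the noise bound to hold uniformly across the (at most $m$) rank comparisons actually made by the algorithm; a union bound absorbs the extra $\log m$ into the $\wO$ factor in $\gamma$.
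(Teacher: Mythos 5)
Your argument is essentially the paper's: Algorithm~\ref{algo:pbh2} works in log-space, uses Laplace noise scaled to the multiplicative sensitivity, and the proof of Theorem~\ref{thm:main}(c) likewise reduces the private procedure to an \emph{adaptive} procedure (Definition~\ref{def:adaptive}) and invokes Theorem~\ref{thm:robustfdr}. The one genuine difference is where the $\ep{\Theta(\gamma)}$ slack is placed. You shift the cutoffs \emph{down} to $\widetilde\alpha_j = \ep{-2\gamma}\bhc_j$ so that, on the good event, the rejections are adaptive to $\{\bhc_j\}$ themselves; the paper instead uses cutoffs that are shifted \emph{up} by the noise magnitude ($\alpha'_j = \log(\alpha_j+\nu)+\eta\Delta_k$) so that power exactly matches \BHsmall$(q)$ (Theorem~\ref{thm:main}(b)), and absorbs the slack into the FDR analysis, showing adaptivity to $\{q'j/m\}$ with $q' = \ep{O(\eta\Delta_k)}(1+o(1))q$. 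These are dual choices and give the same $q+o(1)$ bound once $\gamma = \tilde o(1)$; your version trades a $(1-o(1))$-factor in power, which is what Theorem~\ref{thm:PrivatePowerInformal} asserts anyway. You also handle the low-probability noise-failure event explicitly via the trivial bound, which the paper glosses over — that is a small improvement in rigor.

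One point to flag, though you inherit it from the informal theorem statement itself: a bound on $\fdr_k$ does not imply a bound on $\fdr$, since $\fdr_k$ ignores realizations with fewer than $k$ false discoveries. Theorem~\ref{thm:robustfdr} and Corollary~\ref{cor:main} give three separate conclusions — $\fdr \le q\log(1/q) + C(1+o(1))q$, $\fdr_2 \le C(1+o(1))q$, and $\fdr_k \le (1+o(1))q$ — and only the last of these is $q+o(1)$. So the sentence ``delivering the advertised bound $\fdr \le q+o(1)$ via the $\fdr_k$ estimate'' should read $\fdr_k \le q + o(1)$; for ordinary $\fdr$ the additive $q\log(1/q)$ term is unavoidable under the independence-only assumption, as the paper explains after Lemma~\ref{lm:fdr_1_term}.
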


\begin{theorem} [informal]\label{thm:PrivatePowerInformal}
Again, suppose $\sqrt{k}\eta/\epsilon = \tilde o(1)$. With probability $1 - o(1)$, the differentially private \BHq{} algorithms with target nominal level $q$ makes at least as many discoveries as the \BHq{} step-down procedure truncated at $k$ with nominal level $(1-o(1))q$.

\end{theorem}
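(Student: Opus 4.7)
The proof proceeds by a coupling between the private algorithm and the non-private step-down BHq truncated at~$k$ run at the slightly reduced level $q' = (1-\tilde{o}(1))q$. The high-level idea is that, on a high-probability ``good event,'' every piece of privacy noise distorts the log-$p$-values by at most $\tilde{o}(1)$, so every rejection made by the non-private procedure is inherited by the private one.

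First I would define the good event $\mathcal{E}$ on which every piece of randomness introduced by the private primitives perturbs each relevant log-$p$-value by at most $\tau = \widetilde{O}(\sqrt{k}\,\eta/\epsilon)$. There are two sources of noise: (i) the noise used inside the one-shot top-$k$ primitive to select which $k$ hypotheses to consider, and (ii) the noise added to the subsequent estimates of those $p$-values. A union bound over the $m$ comparisons of~(i) and the $k$ estimates of~(ii), combined with standard tail bounds for the Laplace/exponential-type noise at multiplicative sensitivity~$\eta$ and privacy parameter~$\epsilon$, yields $\prob(\mathcal{E}) \ge 1 - o(1)$. The assumption $\sqrt{k}\,\eta/\epsilon = \tilde{o}(1)$ then forces $\tau = \tilde{o}(1)$ and hence $e^{\pm\tau} = 1 \pm \tilde{o}(1)$.

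Next, on $\mathcal{E}$ every noisy $p$-value $\tilde p_i$ used by the algorithm satisfies $\tilde p_i \in [e^{-\tau} p_i,\, e^{\tau} p_i]$, and the set $S$ of $k$ indices returned by the one-shot top-$k$ primitive contains every hypothesis with $p_i \le e^{-2\tau} p_{(k)}$, in particular every true $p$-value strictly below the $k$-th order statistic. Let $R^{*}$ be the number of rejections produced by the non-private step-down BHq truncated at~$k$ at level~$q'$. Since $R^{*} \le k$ and $p_{(R^{*})} \le q' R^{*}/m \le q' k/m$, the $R^{*}$ smallest $p$-values lie well inside the top-$k$ window and therefore all belong to~$S$ on~$\mathcal{E}$.

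Finally, the conservative critical values used by the private algorithm are $\alpha_j = e^{-2\tau}\, qj/m = (1-\tilde{o}(1))\,\bhc_j$, which still controls FDR by Theorem~\ref{thm:robustfdr} (in agreement with Theorem~\ref{thm:PrivateFDRInformal}). Taking $q' = e^{-3\tau} q = (1-\tilde{o}(1))q$, we obtain for every $j \le R^{*}$ the chain $\tilde p_{(j)}^{S} \le e^{\tau} p_{(j)} \le e^{\tau} q' j/m \le \alpha_j$, so the step-down iteration of the private algorithm cannot halt before step~$R^{*}$ and hence rejects at least $R^{*}$ hypotheses on~$\mathcal{E}$. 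The main obstacle is the uniform concentration of the top-$k$ noise at the scale $\widetilde{O}(\sqrt{k}/\epsilon)$ rather than the naive $\widetilde{O}(k/\epsilon)$: this is precisely the accuracy guarantee of the new one-shot top-$k$ primitive advertised in the introduction, and once it is in hand, the coupling above is essentially bookkeeping.
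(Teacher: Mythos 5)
Your proof is correct and follows essentially the same strategy as the paper's proof of Theorem~\ref{thm:main}(b): establish a high-probability bound (your event $\mathcal{E}$) controlling the noise at scale $\tau = \widetilde{O}(\sqrt{k}\eta/\epsilon) = \tilde o(1)$ in log-space, use it to show the selected set contains the small true $p$-values, and then run a short chain of inequalities to conclude the private step-down loop cannot halt before the non-private one does. The paper packs this into three lines by invoking the already-proved accuracy guarantee of the top-$k$ primitive ($y_j \le x_{(j)} + \eta\Delta_k$), which is exactly your good event in disguise.

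The one genuine difference is bookkeeping of the $(1-o(1))$ slack. The paper's Algorithm~\ref{algo:pbh2} uses \emph{more lenient} log-space cutoffs $\alpha'_j = \log(\alpha_j + \nu) + \eta\Delta_k > \log\alpha_j$; this lets the formal Theorem~\ref{thm:main}(b) compare against the non-private step-down at the \emph{same} level $q$, at the cost of the FDR bound in Theorem~\ref{thm:main}(c) inflating to $\ep{\eta\Delta_k}(1+o(1))q$. You instead take \emph{stricter} cutoffs $\ep{-2\tau}\bhc_j$ so the private procedure remains adaptive to the BHq critical values at level $q$ (so Theorem~\ref{thm:robustfdr} gives FDR $\lesssim q$ directly), and then you must compare against the non-private step-down at the reduced level $q' = \ep{-3\tau}q = (1-o(1))q$. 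The two parameterizations are equivalent by rescaling $q$, and both correctly instantiate the informal Theorem~\ref{thm:PrivatePowerInformal}; your version is slightly more self-contained because it makes the good event and the "which indices land in $S$" argument explicit rather than outsourcing them to Theorem~\ref{thm:peeling}/Theorem~\ref{thm:oneshot}.
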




\paragraph{The One-Shot Top~$k$ Mechanism.}  Consider a database of $n$
binary strings $d_1,d_2,\ldots,d_n$. Each $d_i$ corresponds to an
individual user and has length $m$. Let $x_j=\sum_i d_{ij}$ be the
total sum of the the $j$th column.  In the reporting top-$k$
problem\footnote{In our paper, it is more convenient to consider the
minimum-$k$ (or bottom-$k$) elements. But we still call it the top-$k$
problem following the convention.}, we wish to report, privately, some
locations $j_1,\ldots, j_k$ such that $x_{j_\ell}$ is close to the
$\ell$th smallest element as possible.  The peeling mechanism reports
and removes the minimum noisy count and repeats on the remaining
elements, each time adding fresh noise.  Such a mechanism is
$(\e,\delta)$-differentially private if we add
$\lap(\sqrt{k\log(1/\delta)}/\e)$ noise each time. 

In contrast, in the one-shot top $k$ mechanism, we add
$\wO(\sqrt{k})$ noise to each value and then report the $k$ locations
with the minimum noisy values.\footnote{For technical reasons, if we
want the values of the computations in addition to their indices we
only know how to prove privacy if we add fresh random noise before
releasing these values.}  Compared to the peeling mechanism, the
one-shot mechanism is appealingly simple and much more efficient. But
it is surprisingly challenging to prove its privacy.  Here we give
some intuition. 

When there are large gaps between $x_i$'s, the change of one
individual can only change the value of each $x_i$ by at most $1$, so
result of the one-shot algorithm is stable. Hence the privacy is
easily guaranteed.  In the more difficult case, when there are many
similar values (think of the case when all the values are equal), the
true top~$k$ set can be quite sensitive to the change of input values.
But in the one-shot algorithm we add independent symmetric noise,
centered at zero, to these values.  Speaking intuitively, this yields
an (almost) equal chance that on two adjacent input values a noisy
value will ``go up'' or ``go down,'' leading to cancellation of
certain first order terms in the (logarithms of) the probabilities of
events and hence a tight control between their ratio.

To capture this intuition, we consider the ``bad'' events, which have
large probability bias between two neighboring inputs. Those bad
events can be shown to happen when the sum of some dependent random
variables deviates from its mean.  The dependencies among the random
variables prevents us from applying a concentration bound directly.
To deal with this difficulty, we first partition the event spaces to
remove the dependence.  Then we apply a coupling technique to pair up
the partitions for the two neighboring inputs. For each pair we apply
a concentration bound to bound the probability of bad events. The
technique appears to be quite general and might be useful in other
settings.

\section{Preliminaries on Differential Privacy}
\label{sec:defs}
We revisit some basic concepts in differential privacy.
\begin{definition}
Data sets $\dbax,\dbby$ are said to be {\em neighbors}, or {\em adjacent}, if one is obtained by removing or adding a single data item.
\end{definition}

Differential privacy, now sometimes called {\em pure differential privacy}, was defined and first constructed in~\cite{DworkMNS06}.  The relaxation defined next is sometimes referred to as {\em approximate differential privacy}.  
\begin{definition}[Differential privacy~\cite{DworkMNS06,DworkKMMN06}] 
\label{def:dp}
A randomized mechanism $M$ is $(\e,\delta)$-differentially private if
for all adjacent $\dbx,\dby$, and for any event $S$: $\prob_D[S]\leq
\ep{\e} \prob_{D'}[S] + \delta$. 
Pure differential privacy is the special case of approximate differential privacy in which~$\delta=0$.
\end{definition}

Denote by $\lap(\lambda)$ the Laplace distribution with scale $\lambda > 0$, whose probability density function reads
\[
f_{\lap}(z) = \frac1{2\lambda} \ep{-|z|/\lambda}.
\]


\begin{definition}[Sensitivity of a function]
Let $f$ be a function mapping databases to $\R^k$.  The {\em sensitivity of $f$}, denoted $\Delta f$, is the maximum over all pairs $\dba,\dbb$ of adjacent datasets of $\|f(\dba)-f(\dbb)\|$.
\end{definition}

\remove{Differential privacy is closed under composition, even in the {\em adaptive} case, in which the $i$th computation is chosen as a function of the (differentially private3) outputs of the first $i-1$ computations.}

We will make heavy use of the following two lemmas on differential privacy.
\begin{lemma}[Laplace Mechanism~\cite{DworkMNS06}]\label{lm:comp}
Let $f$ be a function mapping databases to $\R^k$.  The mechanism that, on database~$\dba$, adds independent random draws from $\lap((\Delta f)/\eps)$ to each of the $k$ components of $f(\dba)$, is $(\eps,0)$-differentially private.
\end{lemma}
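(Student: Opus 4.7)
The plan is to prove the bound pointwise on the density of the mechanism's output and then integrate over any event $S$. Write $M(D) = f(D) + (Y_1,\ldots,Y_k)$, where the $Y_i$ are independent draws from $\lap(\lambda)$ with $\lambda = \Delta f/\eps$. Since the Laplace density factors across coordinates, the density of $M(D)$ at $z \in \R^k$ is
\[
g_D(z) \;=\; \prod_{i=1}^{k} \frac{1}{2\lambda}\exp\!\left(-\frac{|z_i - f(D)_i|}{\lambda}\right).
\]
Here I am implicitly using that the $\|\cdot\|$ in the sensitivity definition refers to the $\ell_1$ norm, which is the norm matched to the Laplace mechanism; this is the standard convention in the cited paper.

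Next, for adjacent datasets $D, D'$ I form the ratio $g_D(z)/g_{D'}(z)$, which simplifies to
\[
\exp\!\left(\sum_{i=1}^{k} \frac{|z_i - f(D')_i| - |z_i - f(D)_i|}{\lambda}\right).
\]
Applying the reverse triangle inequality coordinate-wise gives $|z_i - f(D')_i| - |z_i - f(D)_i| \le |f(D)_i - f(D')_i|$, so the ratio is bounded by $\exp(\|f(D) - f(D')\|_1/\lambda)$. By the definition of sensitivity, $\|f(D) - f(D')\|_1 \le \Delta f$, and by choice of $\lambda = \Delta f/\eps$ this yields the pointwise bound $g_D(z) \le \ep\eps \, g_{D'}(z)$ for every $z$.

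Finally, for any measurable event $S \subseteq \R^k$, integrating the pointwise bound gives
\[
\prob_D[M(D) \in S] \;=\; \int_S g_D(z)\,\d z \;\le\; \ep\eps \int_S g_{D'}(z)\,\d z \;=\; \ep\eps\,\prob_{D'}[M(D') \in S],
\]
which is exactly $(\eps,0)$-differential privacy as in Definition~\ref{def:dp} (with $\delta = 0$). There is no real obstacle here; the only subtle point is recognizing that the product structure of the independent Laplace noises converts the coordinate-wise triangle inequality into an $\ell_1$ bound, which is precisely what the sensitivity controls. If one wanted to handle the Gaussian mechanism or an $\ell_2$ sensitivity notion, the same outline would apply but the cancellation in the exponent would be different and would force $\delta > 0$; for the pure Laplace/$\ell_1$ case the argument is exact.
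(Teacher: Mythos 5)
Your proof is correct and is the standard argument for the Laplace mechanism: the paper itself gives no proof here (it cites \cite{DworkMNS06}), and your density-ratio computation via the coordinate-wise reverse triangle inequality, together with the correct observation that the sensitivity must be taken in the $\ell_1$ norm, is exactly the canonical derivation from that reference. Nothing to add.
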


\begin{lemma}[Advanced Composition~\cite{DworkRV10}]\label{lm:advancecomp}
For all $\eps, \delta, \delta' \ge 0$, the class of $(\eps, \delta')$-differentially private mechanisms satisfies 
$( \sqrt{2k \ln (1/\delta)}\eps +  k\eps(e^\eps -1)/2, k\delta' + \delta)$-differential privacy under $k$-fold adaptive composition.
\end{lemma}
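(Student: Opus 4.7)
The plan is to follow the privacy-loss random variable approach of Dwork--Rothblum--Vadhan. Fix adjacent databases $\dba,\dbb$ and let $M = (M_1,\dots,M_k)$ denote the adaptive composition, so that $M_i$'s output distribution can depend on $(o_1,\dots,o_{i-1})$. For a candidate output transcript $o = (o_1,\dots,o_k)$, define the privacy loss random variable
\[
L_i(o_{\le i}) \;=\; \ln \frac{\prob[M_i(\dba) = o_i \mid o_{<i}]}{\prob[M_i(\dbb) = o_i \mid o_{<i}]}, \qquad L(o) = \sum_{i=1}^k L_i(o_{\le i}),
\]
where the outer probability is over sampling $o \sim M(\dba)$. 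The standard reduction is: if $\prob_{o \sim M(\dba)}[L(o) > \eps^\star] \le \delta^\star$, then $M$ is $(\eps^\star, \delta^\star)$-differentially private, because any output event $S$ satisfies $\prob_{M(\dba)}[S] \le \ep{\eps^\star}\prob_{M(\dbb)}[S] + \delta^\star$ after splitting on $\{L \le \eps^\star\}$.

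First I would dispose of the $\delta'$ slack. For each $i$, $(\eps,\delta')$-DP implies that there exist events $E_i^{\dba}, E_i^{\dbb}$ of probability at most $\delta'$ such that, off these events, the conditional output distributions of $M_i$ on $\dba,\dbb$ are pointwise within a multiplicative factor $\ep{\eps}$ of each other. A union bound across the $k$ rounds removes a $k\delta'$ mass from the transcript, which accounts for the additive $k\delta'$ in the final $\delta$-parameter. From here on I can pretend each $M_i$ is pure $(\eps,0)$-differentially private conditional on the prefix, so $|L_i| \le \eps$ holds pointwise.

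The heart of the proof is a concentration bound on $L = \sum_i L_i$. Two facts about each $L_i$, conditional on the history $o_{<i}$: (i) $|L_i| \le \eps$, and (ii) a mean bound $\E_{o_i \sim M_i(\dba)\mid o_{<i}}[L_i] \le \eps(\ep{\eps}-1)/2$. Fact (ii) is the key computation: it follows because for any two distributions $P,Q$ with $\ep{-\eps}Q(o) \le P(o) \le \ep{\eps} Q(o)$ one has $\mathrm{KL}(P\|Q) \le \eps(\ep{\eps}-1)/2$, which can be verified by pairing each $o$ with its ``symmetric partner'' in the chi-squared-like expansion of $P\ln(P/Q) + Q\ln(Q/P)$ and using $\ln x + \ln(1/x) \le 0$. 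Facts (i) and (ii) say that $L_i - \E[L_i \mid o_{<i}]$ is a bounded martingale-difference sequence with $|L_i - \E[L_i\mid o_{<i}]| \le 2\eps$, but a sharper range of $\eps$ can be obtained using the same symmetrization. Applying Azuma--Hoeffding to this martingale yields, with probability at least $1-\delta$,
\[
L \;\le\; k\cdot \frac{\eps(\ep{\eps}-1)}{2} \;+\; \sqrt{2k\ln(1/\delta)}\,\eps.
\]
Plugging this into the reduction described above gives the claimed $(\sqrt{2k\ln(1/\delta)}\,\eps + k\eps(\ep{\eps}-1)/2,\; k\delta' + \delta)$ guarantee.

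The main obstacle is the adaptive nature of the composition: the distributions $P_i,Q_i$ themselves depend on the random prefix, so $L_i$ is not a sum of independent variables and one cannot directly invoke a Chernoff bound. The fix is to recognize the conditional mean bound (ii) as holding \emph{pointwise} in the prefix, so that the centered sequence $L_i - \E[L_i\mid o_{<i}]$ is a genuine martingale difference sequence under $o \sim M(\dba)$; Azuma--Hoeffding then applies to this martingale without any independence assumption. A secondary subtlety is the mean bound (ii); I would derive it carefully by writing the KL divergence as an expectation of $\ln(P/Q)$ and Taylor-expanding to second order, rather than relying on looser bounds like $\eps^2$, since it is the $\eps(\ep{\eps}-1)/2$ constant that appears in the statement.
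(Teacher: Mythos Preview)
The paper does not actually prove this lemma; it merely states it with a citation to~\cite{DworkRV10} and uses it as a black box. So there is no ``paper's own proof'' to compare against. Your proposal is a faithful sketch of the standard Dwork--Rothblum--Vadhan argument that the citation points to: define the privacy-loss random variable, peel off the $k\delta'$ mass via a union bound over bad events, use the KL bound $\mathrm{KL}(P\|Q)\le \eps(\ep{\eps}-1)/2$ for $\eps$-close distributions to control the conditional means, and apply Azuma--Hoeffding to the resulting bounded martingale-difference sequence. This is correct in outline and matches the cited source.

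One minor point worth tightening if you actually write this out: the Azuma step requires the centered increments to lie in an interval of length $2\eps$ to get exactly the constant $\sqrt{2k\ln(1/\delta)}\,\eps$. Since $L_i\in[-\eps,\eps]$ has range $2\eps$, after subtracting the (prefix-measurable, nonnegative) conditional mean the centered increment still lies in an interval of width $2\eps$, so Hoeffding's lemma with $c_i=\eps$ applies and the constant comes out right. Your remark that ``a sharper range of $\eps$ can be obtained using the same symmetrization'' gestures at this but is a bit vague; in a full proof you should state precisely which form of Azuma--Hoeffding you invoke and verify the range hypothesis.
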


\subsection{Reporting top-$k$ elements}

Consider the problem of privately reporting the minimum $k$ locations
of $m$ values $x_1, \ldots, x_m$.  Here two input values $(x_1,
\ldots, x_m)$ and $(x'_1, \ldots, x'_m)$ are called adjacent if 
$|x_i-x'_i|\leq 1$ for all $1\leq i\leq m$.  When $k=1$, this is
solved by Algorithm~\ref{algo:rnm} stated below.
\begin{algorithm}[ht]
\caption{Report Noisy Min}
\label{algo:rnm}
\begin{algorithmic}[1]
\REQUIRE $m$ values $x_1, \cdots, x_m$
\ENSURE $j$
\FOR {$i=1$ to $m$}
  \STATE set $y_i = x_i+g_i$ where $g_i$ is sampled
  i.i.d. from $\lap(2/\e)$
\ENDFOR
\STATE return $(i_j = \argmin_i ~ y_i, x_{i_j} + g')$, where $g'$ is fresh random noise sampled from $\lap(2/\e)$
\end{algorithmic}
\end{algorithm}

A standard result in differential privacy 
\begin{lemma}
Algorithm~\ref{algo:rnm} is $(\e,0)$-differentially private.
\end{lemma}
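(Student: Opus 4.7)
The plan is to follow the classical ``noise coupling'' argument for Report Noisy Max/Min, adapted to the minimum. Fix neighboring inputs $x$ and $x'$ (so $|x_i-x_i'|\le 1$ for every $i$) and a target index $i^{\star}\in[m]$. The aim is to bound $\Pr_{x}[i_j=i^{\star}]/\Pr_{x'}[i_j=i^{\star}]$ by $\ep{\e}$; since the fresh noise $g'$ enters the output only after $i_j$ is determined and is independent of the $g_i$'s and of the data, once the index analysis is in hand the accompanying noisy-value release is handled by an independent application of the Laplace mechanism.

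The main step is to condition on the vector $g_{-i^{\star}}=(g_i)_{i\ne i^{\star}}$, and set $y^{\star}=\min_{i\ne i^{\star}}(x_i+g_i)$ and $y'^{\star}=\min_{i\ne i^{\star}}(x_i'+g_i)$. Then $\{i_j=i^{\star}\}$ is exactly $\{g_{i^{\star}}\le y^{\star}-x_{i^{\star}}\}$ under $x$, and exactly $\{g_{i^{\star}}\le y'^{\star}-x'_{i^{\star}}\}$ under $x'$. A one-line computation using $|x_i-x_i'|\le 1$ coordinatewise gives $y'^{\star}\ge y^{\star}-1$ and $-x'_{i^{\star}}\ge -x_{i^{\star}}-1$, hence the pointwise bound
\[
y'^{\star}-x'_{i^{\star}} \;\ge\; (y^{\star}-x_{i^{\star}})-2,
\]
so the one-dimensional ``acceptance threshold'' for $g_{i^{\star}}$ shrinks by at most $2$ when we switch from $x$ to $x'$.

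The second ingredient is the Laplace density inequality: for $g_{i^{\star}}\sim\lap(2/\e)$ with density $f$, one has $f(s-2)\ge\ep{-\e}f(s)$ for every $s\in\R$. Integrating this estimate over $(-\infty,\,y^{\star}-x_{i^{\star}}]$ and combining with the monotonicity of the CDF yields
\[
\Pr\bigl[g_{i^{\star}}\le y'^{\star}-x'_{i^{\star}}\,\big|\,g_{-i^{\star}}\bigr] \;\ge\; \ep{-\e}\,\Pr\bigl[g_{i^{\star}}\le y^{\star}-x_{i^{\star}}\,\big|\,g_{-i^{\star}}\bigr].
\]
Taking expectations over $g_{-i^{\star}}$ preserves the inequality and gives the desired privacy bound for the index; because $i^{\star}$ was arbitrary and the argument is symmetric in $x,x'$, the reverse inequality follows too. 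The main obstacle I anticipate is simply keeping the coupling disciplined enough that the worst-case shift of $2$ is used exactly once and then paid for via the single Laplace density ratio; beyond that, the extension to the full released pair $(i_j,x_{i_j}+g')$ is routine since the conditional distribution of $x_{i_j}+g'$ given $i_j$ is just $\lap(2/\e)$ centered at a quantity of sensitivity one.
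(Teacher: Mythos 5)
Your index argument is the standard Report Noisy Min coupling and is done correctly: conditioning on $g_{-i^\star}$, bounding the shift of the acceptance threshold by $2$, and invoking the Laplace CDF ratio $F(s-2)\ge \ep{-\e}F(s)$ for scale $2/\e$ yields $\Pr_x[i_j=i^\star]\le \ep{\e}\Pr_{x'}[i_j=i^\star]$. The paper itself gives no proof (it is labelled ``a standard result''), and for the index alone your argument is exactly the proof one would write.

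The gap is in the last step, which you label ``routine.'' Algorithm~\ref{algo:rnm} returns the \emph{pair} $(i_j,\,x_{i_j}+g')$, and once you condition on $i_j=i^\star$ the released value $x_{i^\star}+g'$ is a fresh Laplace mechanism on a sensitivity-$1$ quantity with scale $2/\e$, i.e.\ an additional $\e/2$ of privacy loss. Composing with your index bound gives $(3\e/2,0)$-DP, not $(\e,0)$-DP, and this slack is not an artifact of loose bookkeeping: taking $m=2$, $x=(0,0)$, $x'=(1,-1)$ and a value $v<0$ makes both the index ratio (threshold shifted by $-2$ on the exponential tail) and the value density ratio (shift $1$ with $v<x_{i^\star}$) go in the same direction, so the joint ratio genuinely exceeds $\ep{\e}$. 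So as written your proof does not establish the lemma; you should either flag this and argue for noise scale $3/\e$ (or split the budget unevenly between the $g_i$'s and $g'$), or restrict the claim to the released index only. There is no joint cancellation to exploit here, precisely because both factors depend on $d=x'_{i^\star}-x_{i^\star}$ with the same sign in the worst case.
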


Notably, directly reporting $y_j$ shall violate pure privacy.  That is why we need to add fresh random noise to
$x_j$.  For the top-$k$ problem, one can apply the above process $k$
times, each time removing the output element and applying the
algorithm to the remaining elements, hence called the \emph{peeling
mechanism}. By composition theorem, it is immediate such a mechanism
is $(\e,0)$-differentially private if we add noise $\lap(k/\e)$ at
each step. Or one can add $\lap(\sqrt{k\log(1/\delta)}/\e)$ noise to
get $(\e,\delta)$-differential privacy by applying the advanced composition
theorem~\cite{dwork2014algorithmic}. 

\begin{theorem}\label{thm:peeling}
The peeling mechanism is $(\e,\delta)$-differentially private. Assume $k = o(m)$, then with probability $1-o(1)$, for every $1\leq j\leq k$,
\[
x_{i_j}-x_{(j)} \leq (1 + o(1))\frac{\sqrt{k\log(1/\delta)} \log m} {\e} \,.
\]
\end{theorem}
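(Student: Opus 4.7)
The proof splits cleanly into a privacy half and a utility half. For privacy, I would first recall the standard fact that a single invocation of Report Noisy Min with Laplace noise $\lap(2/\e_0)$ is $(\e_0,0)$-differentially private (this is the preceding lemma, plus an extra $\e_0$ for the fresh-noise release of $x_{i_j}+g'$, which is another instance of the Laplace mechanism applied to the sensitivity-$1$ function $x_{i_j}$). Then I would pick $\e_0 = \Theta(\e/\sqrt{k\log(1/\delta)})$ so that noise $\lap(\lambda)$ with $\lambda = \Theta(\sqrt{k\log(1/\delta)}/\e)$ is used in each of the $k$ peeling rounds. Advanced composition (Lemma~\ref{lm:advancecomp}) applied to the $2k$ (or $k$, depending on whether one also releases values) $\e_0$-DP primitives yields the claimed $(\e,\delta)$-DP guarantee for the overall peeling mechanism.

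For utility, let $g_i^{(j)}$ denote the fresh noise used for element $i$ in iteration $j$. The Laplace tail bound $\prob(|g|>t\lambda)=e^{-t}$ together with a union bound over the at most $km$ noise samples (which is $o(m^2)$ by $k=o(m)$, so $\log(km)=(1+o(1))\log m$) gives
\[
G := \max_{i,j}\,|g_i^{(j)}|\;\le\;(1+o(1))\,\lambda\log m
\]
with probability $1-o(1)$. Fix any $j\le k$. Among the indices $\{(1),\ldots,(j)\}$ of the $j$ smallest true values, at most $j-1$ have been removed in iterations $1,\ldots,j-1$, so at least one such index $\ell$ is still available in iteration $j$; by construction $x_\ell\le x_{(j)}$. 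Because $i_j$ minimises the noisy value among remaining elements,
\[
x_{i_j}+g_{i_j}^{(j)}\;\le\;x_\ell+g_\ell^{(j)}\;\le\;x_{(j)}+G,
\]
and rearranging gives $x_{i_j}-x_{(j)}\le g_\ell^{(j)}-g_{i_j}^{(j)}\le 2G$, which is at most $(1+o(1))\sqrt{k\log(1/\delta)}\log m/\e$ after absorbing the universal constants into the $(1+o(1))$ factor.

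The main obstacle is neither step per se but the tightness of the leading constant: advanced composition nominally costs a factor of $2\sqrt{2}$, the Laplace-mechanism calibration costs another factor of $2$, and the $|g_\ell-g_{i_j}|$ bound costs yet another factor of $2$. To make the statement read with a clean $(1+o(1))$ in front, one has to be careful about (i) using the sharper form of advanced composition with the dominant $\sqrt{2k\log(1/\delta)}\,\e_0$ term and subleading $k\e_0^2/2$ term treated as lower order in the $\e,\delta$ regime of interest, and (ii) using the precise tail $\prob(\max_{i\le N}|g_i|>\lambda(\log N + t))\le e^{-t}$ rather than a loose union bound, so that the $\log m$ factor appears with coefficient~$1$. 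Everything else (the survivor argument, the Laplace tails) is routine.
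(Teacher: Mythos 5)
Your proof follows the same route the paper implicitly takes: the paper never actually writes out a proof of this theorem, it just asserts privacy ``by applying the advanced composition theorem'' and states the utility bound, so your write-up is essentially the missing proof. The decomposition into an advanced-composition privacy argument and a Laplace-tail-plus-survivor utility argument is exactly right, and the survivor observation (that some index $\ell$ with $x_\ell \le x_{(j)}$ is still alive in round $j$) is the correct way to handle the adaptivity of which elements remain.

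The one substantive issue is the leading constant, and you have diagnosed it correctly but then papered over it. Mid-proof you write that $2G$ ``is at most $(1+o(1))\sqrt{k\log(1/\delta)}\log m/\e$ after absorbing the universal constants into the $(1+o(1))$ factor,'' which is not a legitimate move: $(1+o(1))$ tends to $1$ and cannot swallow fixed constants. Tracking the constants through your own chain, \RNMi{} with value release calibrates to $\lap(2/\e_0)$ for $(\e_0,0)$-DP, advanced composition (Lemma~\ref{lm:advancecomp}, dominant term $\sqrt{2k\ln(1/\delta)}\,\e_0$) forces $\e_0 \approx \e/\sqrt{2k\log(1/\delta)}$, so $\lambda \approx 2\sqrt{2}\,\sqrt{k\log(1/\delta)}/\e$, and the two-sided gap bound $g_\ell^{(j)} - g_{i_j}^{(j)} \le 2G$ contributes another factor of $2$; altogether you get roughly $4\sqrt{2}\cdot\sqrt{k\log(1/\delta)}\log m/\e$, not $(1+o(1))\cdot\sqrt{k\log(1/\delta)}\log m/\e$. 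Your final paragraph identifies precisely these three sources of slack, and that diagnosis is correct. The honest conclusion is that the peeling argument yields the stated bound only up to a universal constant in front of $\sqrt{k\log(1/\delta)}\log m/\e$ (equivalently, the theorem's $(1+o(1))$ should read $O(1)$, or the privacy guarantee should be $(O(\e),\delta)$ for noise scale exactly $\sqrt{k\log(1/\delta)}/\e$); the paper is simply loose here, and your approach is the standard and correct one.
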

One naturally asks if we can avoid peeling by adding the noise $\lap(\wO(\sqrt{k})$ once and then reporting the $k$-minimum elements.  Such a \emph{one-shot} mechanism would be much more efficient and more natural. One major contribution of this paper is to show indeed the one-shot mechanism is private.


\section{FDR Control of Adaptive Procedures}\label{sec:robust}

We now study the FDR control of the adaptive procedures introduced in Definition~\ref{def:adaptive} and then prove Theorem~\ref{thm:robustfdr}. The class of adaptive procedures includes both the step-up and step-down BHq's. This serves as the first step in proving FDR control properties of our differentially private algorithm.

The ``natural'' way of making the step-down BHq differentially private is to add sufficient noise to each $p$-value to ensure privacy and then run this procedure on the noisy $p$-values. But working with noisy $p$-values completely destroys some crucial properties for proving FDR control of step-down BHq. In particular, the $j$th most significant noisy $p$-value may not necessarily correspond to the $j$th most significant true $p$-value. As a result, it is possible that this (in fact any) procedure may be comparing this noisy $p$-value to the ``wrong'' critical values. In addition, some hypotheses with small $p$-values may even not be rejected due to the noise, so unlike in the non-private \BHq and its variants, there may be gaps in the sequence of $p$-values of the rejected hypotheses.

In the following, we show that even under the above unfavorable conditions, the FDR is still controlled for any procedure adaptive to the BHq critical values $\bhc_j$. This provides theoretical justification for the robustness of BHq observed in a wide range of empirical studies.  While our goal is to show that the step-down BHq still controls the FDR when the $p$-values of the input are randomly perturbed, it is more convenient and general to
consider the procedures which relax the condition of \BHsmall but on unperturbed (true) $p$-values. 

To this end, the class of \textit{adaptive} procedures, including both \BHq and \BHsmall as special examples, is characterized in Definition~\ref{def:adaptive}. Compared with \BHsmall, an adaptive procedure only requires all the rejections below a critical value that depends on the number of rejections, but not that each $p$-value is below the respective critical value. Further, it allows to reject only a subset of $p$-values below some threshold. Surprisingly, even so an adaptive procedure still controls the FDR as stated in Theorem~\ref{thm:robustfdr}. An novel element in Theorem~\ref{thm:robustfdr} is that it only assumes independence within the true null test statistics. Compared with other assumptions in the literature, this new assumption is particularly
simple and has the potential to capture many real life examples. 

In the original proof of FDR control by \BHq \cite{BenjaminiH95}, it assumes (i) the true null
statistics are independent, and (ii) the false null statistics are
independent from the true null statistics. Later,
\cite{prds} proposed a slightly weaker sufficient condition for FDR
control called positive regression dependency on subsets, which,
roughly, amounts to saying that every null statistic is positively
dependent on all true null statistics. In sharp contrast, our theorem
makes no assumption about the correlation structure between the false
null statistics and true null statistics, at a mere cost of a constant
multiplying the nominal level. To see an example that only satisfies
our assumption, take \iid uniform random variables on $(0, 1)$ as the
true null $p$-values, and let all the false null $p$-values be 1 minus
the median of the true null $p$-values.

We consider the bounds on $\fdr,\fdr_2$ and on $\fdr_k$ separately. Here we outline the proof for $\fdr$ and $\fdr_2$ and leave $\fdr_k$ case in the supplemented full version. In the following we prove Theorem~\ref{thm:robustfdr} in two
parts. First we show the claimed bounds for $\fdr$ and $\fdr_2$.  For
$\fdr_k$, we actually consider a related quantity 
\[
\fdr^k := \E \left[\frac{V}{R}; R \ge k \right] \,.
\]
Since $V\leq R$, clearly we have that
$\fdr_k \leq \fdr^k$. We will show (\ref{eq:fdrk}) holds even for
$\fdr^k$.

\subsection{Controlling $\fdr$ and $\fdr_2$}

We will prove the result by constructing the most ``adversarial'' set
of $p$-values. Imagine the following game for a powerful adversary $A$ who are informed of all the $m_1$ false null hypotheses and can even set $p$-values for them. The remaining $p$-values, which are all from the true nulls, are then drawn from \iid $U(0, 1)$. Then $A$ can pick out a subset $S$ of $p$-values with the only
requirement that those $p$-values are upper bounded by $\alpha_{|S|}$ for
the critical values $\alpha_j = jq/m$. $A$'s payoff is then the ratio of the
true nulls (i.e. FDR) in $S$.  The expected payoff of $A$ would be the
upper bound on $\fdr$ for any adaptive procedure. If we require $A$
only receive payoff when he includes at least $k$ true nulls in $S$,
then the corresponding payoff is an upper bound of $\fdr_k$.

First how should $A$ set the $p$-values of alternatives? $0$! because
this way $A$ can include any number of alternatives in $S$ to push up
the size of $S$ so raise up the critical values but without wasting any
``space'' for including more true nulls. With this we can reduce
bounding $\fdr$ to bounding the expected value of a random variable. 

We now present the rigorous argument below.  Denote by
$\mathscr{N}_0$, with cardinality $m_0$, the set of true null
hypotheses, and $\mathscr{N}_1$, with cardinality $m_1$, the set of
false nulls.  Define $\fdp_k = V/(R\vee 1)$ for $V\geq k$ and $0$
otherwise.  Given a realization of $p_1, \ldots, p_m$, we would like
to obtain a tight upper bound on $V/(R \vee 1)$ with the constraint
that the maximum of the rejected $p$-values is no larger than
$\alpha_R$. With this in mind, call $(p_{i_1}, p_{i_2},\ldots,
p_{i_R})$ the rejected $p$-values, among which $V$ of them are from
the $m_0$ many true null $p$-values. Hence, denoting by $p^0_1,
\ldots, p^0_{m_0}$ the true null $p$-values, we see 
\[
p^0_{(V)} \le \max_{1 \le j \le R} p_{i_j} \le \alpha_R.
\]
Taking the
\BHq critical value $\alpha_R = qR/m$ and rearranging this inequality yield $R
\ge \lceil m p^0_{(V)}/q \rceil$, which also makes use of the
additional information that $R$ is an integer. As a consequence, we
get $V/(R \vee 1) \le V/\lceil m p^0_{(V)}/q \rceil$.
Hence, it follows that
\begin{equation}\label{eq:fdp_main}
\fdp_k \le \max_{k \le j \le m_0} \frac{j}{\ceil{m p^0_{(j)}/q}} \,.
\end{equation}

We pause here to point out that this upper bound is tight for the
class of procedures adaptive to the \BHq critical values. Let $j^\star$ be the
index where the maximization of \eqref{eq:fdp_main} is attained. In
constructing the least ``favorable'' set of $p$-values and the most
``adversarial'' procedure, we set $p_j = 0$ whenever $j \in
\mathscr{N}_1$, and let the procedure reject the $j^\star$ smallest
true null $p$-values and $\ceil{m p^0_{(j^\star)}/q} - j^\star$ false
null $p$-values (which are all zero). It is easy to verify the
adaptivity of this case.

Recognizing that the true null $p$-values are independent and
stochastically no smaller than uniform on $(0, 1)$, we may assume the
true null $p$-values are \iid uniform on $(0, 1)$ in taking
expectations of both sides of \eqref{eq:fdp_main}. In addition, it is
easy to see that increasing $m_0$ to $m$ only makes this inequality
more likely to hold. Hence, we have
\begin{equation}\label{eq:fdr_k_upperbound}
\fdr_k \le \E \left[ \max_{k \le j \le m}\frac{j}{\ceil{m U_{(j)}/q}} \right] \,,
\end{equation}
where, as earlier, $U_{(1)} \le U_{(2)} \cdots \le U_{(m)}$ are the order statistics of \iid uniform random variables $U_1, \ldots, U_m$ on $(0, 1)$.

Taking $k = 2$ in \eqref{eq:fdr_k_upperbound}, we proceed to obtain the bound on $\fdr_2$ in the following lemma. We note that because of some technical issues, the following analysis applies to the case of $k=2$. For general $k$, we use a different technique to bound $\fdr_k$. See Section~\ref{sec:fdr-contr-large}.
\begin{lemma}\label{lm:fdr_2_term}
There exists an absolute constant $C$ such that
\[
\E \left[ \max_{2 \le j \le m} \frac{qj}{m U_{(j)}} \right] \le Cq\,.
\]
\end{lemma}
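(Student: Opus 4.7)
The plan is to bound the expectation of the maximum by integrating its tail, using a simple union bound together with the elementary order-statistic estimate.  By the layer-cake identity,
\[
\E\!\left[\max_{2 \le j \le m}\frac{qj}{mU_{(j)}}\right] = \int_0^\infty \P\!\left(\max_{2 \le j \le m}\frac{qj}{mU_{(j)}} > t\right)\d t,
\]
and a union bound over $j$ yields
\[
\P\!\left(\max_{2 \le j \le m}\tfrac{qj}{mU_{(j)}} > t\right) \le \sum_{j=2}^m \P\!\left(U_{(j)} < \tfrac{qj}{mt}\right).
\]

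Next, I would use the estimate $\P(U_{(j)} < x) \le \binom{m}{j} x^j$, which follows by a Markov-type argument applied to the counting variable $N = \#\{i\le m : U_i < x\}$ and the identity $\{U_{(j)} < x\} = \{\binom{N}{j} \ge 1\}$, together with $\E\binom{N}{j} = \binom{m}{j}x^j$.  Combining this with the crude bound $\binom{m}{j} \le (em/j)^j$ and substituting $x = qj/(mt)$ collapses the $j$-dependence conveniently to
\[
\P\!\left(U_{(j)} < \tfrac{qj}{mt}\right) \le \Bigl(\tfrac{eq}{t}\Bigr)^{\!j}.
\]
For $t \ge 2eq$ we have $eq/t \le 1/2$, so $\sum_{j=2}^m (eq/t)^j$ is dominated by a convergent geometric series and is at most $2e^2 q^2/t^2$.

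Splitting the tail integral at $t_0 = 2eq$ and using the trivial bound $\P(\cdot) \le 1$ below $t_0$ then gives
\[
\E\!\left[\max_{2 \le j \le m}\tfrac{qj}{mU_{(j)}}\right] \le 2eq + \int_{2eq}^\infty \frac{2e^2 q^2}{t^2}\,\d t = 2eq + eq = 3eq,
\]
so the lemma holds with $C = 3e$.

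The key structural point---and the reason the lemma restricts to $j \ge 2$---is that including $j = 1$ would contribute a term $(eq/t)^1$ to the geometric sum, whose tail integral diverges logarithmically.  This is exactly the source of the additive $q\log(1/q)$ term in the $\fdr$ bound of Theorem~\ref{thm:robustfdr}, which must be handled separately.  Sharpening the constant from $3e$ down to the advertised $C < 3$ would require a tighter tail estimate (for instance, retaining $(1-x)^{m-k}$ factors in the binomial CDF or optimizing the threshold $t_0$), but the scheme above already yields a universal constant independent of $m$ and $q$, which is all the lemma asserts.
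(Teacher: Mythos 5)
Your proof is correct but takes a genuinely different route from the paper. The paper represents the uniform order statistics as $U_{(j)} = T_j/T_{m+1}$ with $T_j$ a partial sum of i.i.d.\ exponentials, shows that $W_j = jT_{m+1}/T_j$ is a backward submartingale (Lemma~\ref{lm:submartingale}), and then applies Doob's $L\log L$ maximal inequality to bound $\E[\max_{2\le j\le m} W_j/m]$ in terms of $\E(\frac{W_2}{m}\log\frac{W_2}{m};\frac{W_2}{m}\ge 1)$, which is estimated directly from the Beta$(2,m-1)$ density. Your approach is more elementary: the layer-cake formula, a union bound over $j$, the factorial-moment estimate $\P(U_{(j)}<x)\le\binom{m}{j}x^j$ (via Markov applied to $\binom{N}{j}$, where $N\sim\mathrm{Bin}(m,x)$), and the crude $\binom{m}{j}\le (em/j)^j$ which conveniently collapses the $j$-dependence to $(eq/t)^j$; the resulting geometric tail $\P(\max>t)\lesssim q^2/t^2$ is then integrable. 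The two routes give universal constants of comparable size ($5e/(e-1)\approx 7.9$ from the paper's displayed bound, $3e\approx 8.15$ from yours), both looser than the $C<3$ advertised in Theorem~\ref{thm:robustfdr}, so you lose nothing on that score. Your structural remark that the $j=1$ term would make the geometric sum's tail integral diverge logarithmically, exactly matching the paper's separate treatment of that term in Lemma~\ref{lm:fdr_1_term}, shows you've identified the right obstruction. What your route buys is transparency and a self-contained argument free of submartingale machinery; what the paper's route buys is a bound that flows from classical maximal inequalities and is closer in spirit to the techniques used for $\fdr^k$ in Section~\ref{sec:fdr-contr-large}.
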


To bound the above expectation, we use a well-known representation for the
uniform order statistics (e.g. see \cite{order}):
$(U_{(1)}, \ldots, U_{(m)}) \overset{d}{=} \left( T_1 /T_{m+1}, \ldots, T_m/ T_{m+1} \right)$,
where $T_j = \xi_1 + \cdots + \xi_j$, and $\xi_1, \ldots, \xi_{m+1}$ are \iid exponential random
variables.  Denote by $W_j = jT_{m+1}/T_j$. Then Lemma~\ref{lm:fdr_2_term} is equivalent to bounding
$\E \left( \max_{2 \le j \le m} W_j/m \right) \le C$.

Intuitively the maximum is more likely to be realized by smaller $j$'s
as increasing $j$ would increase the concentration of $T_j/j =
\sum_{i=1}^j \xi_i/j$.  Then above expectation can be bounded by
considering a few terms of $W_j$ for small $j$'s.  Indeed this
intuition can be made rigorous by observing that $W_1, \ldots,
W_{m+1}$ is a \emph{backward submartingale}, and hence we can use the
well known technique to bound $\E \max_{2\le j\le m} W_j$ by some
statistics of $W_2$, which we can then estimate. We present the
details as below.

\begin{lemma}\label{lm:submartingale}
With $T_j, W_j$ defined as above, we have that $W_1, \ldots, W_{m+1}$
is a backward submartingale with respect to the filtration (or
conditional on the ``history'') $\mathcal{F}_j=\sigma(T_j,
T_{j+1},\ldots,T_{m+1})$ for $j=1,\ldots,m+1$,
i.e. $\E(W_j|\mathcal{F}_{j+1}) \geq W_{j+1}$, for $j=1,\ldots,m$.
\end{lemma}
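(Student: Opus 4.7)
The plan is to verify the defining inequality $\E(W_j \mid \mathcal{F}_{j+1}) \ge W_{j+1}$ directly by computing the conditional expectation. First I would observe that $T_{m+1}$ is $\mathcal{F}_{j+1}$-measurable, so it factors out:
\[
\E(W_j \mid \mathcal{F}_{j+1}) \;=\; j\, T_{m+1} \cdot \E(1/T_j \mid \mathcal{F}_{j+1}).
\]
Moreover, because the increments $\xi_1,\ldots,\xi_{m+1}$ are independent and $T_j = T_{j+1} - \xi_{j+1}$, the conditional law of $T_j$ given $\mathcal{F}_{j+1} = \sigma(T_{j+1},\ldots,T_{m+1})$ collapses to its conditional law given $T_{j+1}$ alone.

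The heart of the proof is to identify this conditional distribution. A classical representation for sums of i.i.d. exponentials (equivalently, the uniform order-statistics property of Poisson arrival times) says that, given $T_{j+1} = t$, the vector $(T_1,\ldots,T_j)/t$ has the distribution of the order statistics of $j$ i.i.d. uniform $(0,1)$ random variables. In particular, $T_j/T_{j+1}$ conditional on $T_{j+1}$ has the density $j u^{j-1}$ on $(0,1)$ of the maximum $U_{(j)}$ of $j$ i.i.d. uniforms. Hence, for $j \ge 2$,
\[
\E(T_{j+1}/T_j \mid T_{j+1}) \;=\; \int_0^1 \frac{1}{u}\, j u^{j-1}\, du \;=\; \frac{j}{j-1},
\]
so $\E(1/T_j \mid \mathcal{F}_{j+1}) = \frac{j}{(j-1)\, T_{j+1}}$.

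Combining the two displays gives
\[
\E(W_j \mid \mathcal{F}_{j+1}) \;=\; \frac{j^2}{j-1}\cdot\frac{T_{m+1}}{T_{j+1}},
\]
and since $W_{j+1} = (j+1)\,T_{m+1}/T_{j+1}$, the submartingale inequality reduces to the elementary fact $j^2/(j-1) \ge j+1$, i.e.\ $j^2 \ge j^2 - 1$, which is immediate. This simultaneously certifies that each $W_j$ is conditionally integrable (for $j \ge 2$), since $T_{m+1}/T_{j+1}$ has finite expectation.

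The only genuine obstacle is the boundary case $j=1$: there $T_1 = \xi_1$ is a single exponential and $\E(1/\xi_1) = \infty$, so the integral above diverges and the conditional expectation becomes infinite. I would handle this either by adopting the natural convention $+\infty \ge W_2$ at the boundary of a backward submartingale, or, more cleanly, by noting that the downstream application in Lemma~\ref{lm:fdr_2_term} only ever invokes the submartingale inequality for the indices $j = 2,\ldots,m$ appearing in $\max_{2 \le j \le m} W_j$, so in effect only $W_2,\ldots,W_{m+1}$ are used. Beyond this caveat, the whole argument is just the algebraic consequence of recognizing $T_j/T_{j+1}\mid T_{j+1}$ as the maximum of $j$ uniforms.
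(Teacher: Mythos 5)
Your proof is correct, and it takes a genuinely different route from the paper's. The paper first establishes that the reciprocals $W_j^{-1} = T_j/(jT_{m+1})$ form a backward \emph{martingale}, using the conditional exchangeability of $\xi_1,\dots,\xi_{j+1}$ given $\mathcal{F}_{j+1}$ (by symmetry, $\E(\xi_l\mid \mathcal{F}_{j+1}) = \E(\xi_k\mid \mathcal{F}_{j+1})$ for all $l,k \le j+1$, so $\E(T_j/j\mid\mathcal{F}_{j+1}) = T_{j+1}/(j+1)$), and then upgrades this to a submartingale statement for $W_j$ by one application of Jensen's inequality for the convex map $x\mapsto 1/x$. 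You instead compute $\E(W_j\mid\mathcal{F}_{j+1})$ directly, by invoking the classical Poisson/order-statistics representation to identify the conditional law of $T_j/T_{j+1}$ as $\mathrm{Beta}(j,1)$, which yields the exact constant $\E(W_j\mid\mathcal{F}_{j+1}) = \tfrac{j^2}{j-1}W_{j+1}/(j+1)$ and reduces the claim to $j^2/(j-1)\ge j+1$. Your route is slightly more information-rich (it gives the conditional expectation exactly, not just a one-sided bound via convexity) and makes the strict slack visible; the paper's route is shorter and sidesteps the need to know the Beta density, but both ultimately rest on the same sufficiency-of-$T_{j+1}$ fact. Your explicit flagging of the $j=1$ non-integrability ($\E(1/T_1)=\infty$) is a welcome refinement: the paper's Jensen argument silently produces $+\infty \ge W_2$ there, and, as you observe, the downstream Lemma~\ref{lm:fdr_2_term} only invokes the maximal inequality over $j\ge 2$, so the boundary case is harmless either way.
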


Now we apply Lemma~\ref{lm:submartingale} to prove Lemma~\ref{lm:fdr_2_term}.
\begin{proof}[Proof of Lemma \ref{lm:fdr_2_term}]
Using the exponential random variables representation, it suffices to prove the inequality in which $\E \left[ \max_{2 \le j \le m} \frac{qj}{m U_{(j)}} \right]$ is replaced by
\[
\frac{q}{m}\E \left[ \max_{2 \le j \le m} \frac{jT_{m+1}}{T_j}  \right] = q\E \left[ \max_{2 \le j \le m} \frac{W_j}{m} \right],
\]
where all the notations are followed from Lemma \ref{lm:submartingale}.
Given that $W_j/m$ is a backward submartingale by Lemma \ref{lm:submartingale}, we can apply Theorem 5.4.4 in
\cite{durrett}, which concludes
\begin{align*}
\E \Big( \max_{2 \le j \le m} W_j/m \Big) &\le (1 - \ep{-1})^{-1}\Big{[}1 + \E\Big{(}\frac{W_2}{m}\log \frac{W_2}{m}; \frac{W_2}{m} \geq 1\Big{)} \Big{]} \\
&= (1 - \ep{-1})^{-1}\Big{[}1 + \E\Big{(}\frac{2}{mU_{(2)}}\log \frac{2}{mU_{(2)}}; \frac{2}{mU_{(2)}} \geq 1\Big{)} \Big{]}\,.
\end{align*}
A bit of analysis reveals that the \rhs of the last display is bounded. Hence, to complete the proof it suffices to show that the \rhs of the above display is uniformly bounded for all $m$. The fact that $U_{(2)}$ is distributed as $\mathrm{Beta}(2, m-1)$ allows us to evaluate the expectation in the last display as
\begin{multline}\nonumber
\E\Big{(}\frac{2}{mU_{(2)}}\log \frac{2}{mU_{(2)}}; \frac{2}{mU_{(2)}} \geq 1\Big{)} = \int_0^{\frac2{m}}\frac{u(1-u)^{m-2}}{\mathrm{B}(2, m-1)}\frac2{mu}\log\frac2{mu}\d u \\\overset{v=mu}{=} \frac{m-1}{m}\int_0^22(1-v/m)^{m-2}\log\frac2{v} \d v \le \int_0^22\log\frac2{v} \d v\,,
\end{multline}
which is finite and independent of $m$.
\end{proof}

Provided the bound on $\fdr_2$, we can easily obtain the bound on
$\fdr$. Taking $k = 1$ in \eqref{eq:fdr_k_upperbound}, we get
\[
\mathrm{FDR} \le  \E \left[ \max_{2 \le j \le m}\frac{j}{\ceil{m U_{(j)}/q}} \right] + \E \left[ \min \left\{ \frac{1}{\ceil{m U_{(1)}/q}}, 1 \right\}  \right]\,,
\]
It is easy to show the second term is bounded by $q\log(1/q)$.
The lemma below controls the second term in the above display.
\begin{lemma}\label{lm:fdr_1_term}
\[
\E \left[ \min\Big{\{} \frac{1}{\ceil{m U_{(1)}/q}}, 1 \Big{\}} \right] \le q\log\frac1q + C_0 q\,,
\]
where $C_0 = 1+2/\sqrt{\ep{1}}\leq 2.3$. 
\end{lemma}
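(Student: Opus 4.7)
The plan is to bound $\E[1/\ceil{mU_{(1)}/q}]$ by an explicit integral computation. Since $\ceil{mU_{(1)}/q}\ge 1$ always, the truncation at $1$ inside the minimum is superfluous. First I would rewrite the expectation against the density $m(1-u)^{m-1}$ of $U_{(1)}$ and substitute $t=mu/q$ to obtain
\begin{equation*}
\E\!\left[\min\!\Big\{\frac{1}{\ceil{mU_{(1)}/q}},\,1\Big\}\right] = q \int_0^{m/q} \frac{(1-qt/m)^{m-1}}{\ceil{t}}\,\d t.
\end{equation*}

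The central idea is to split this integral at the two natural transition points $t=1$ and $t=1/q$. On $[0,1]$, $\ceil{t}=1$ and $(1-qt/m)^{m-1}\le 1$, so the piece contributes at most $q$. On $[1,1/q]$, the bounds $\ceil{t}\ge t$ and $(1-qt/m)^{m-1}\le 1$ yield a contribution at most $q\int_1^{1/q}\d t/t = q\log(1/q)$. On the tail $[1/q,m/q]$, I would use $1/\ceil{t}\le 1/t\le q$ together with $(1-qt/m)^{m-1}\le \ep{-(m-1)qt/m}$ and integrate directly to obtain at most $\frac{mq}{m-1}\,\ep{-(m-1)/m}$, which is $\le 2q/\sqrt{\ep{1}}$ for $m\ge 2$ since $m/(m-1)\le 2$ and $(m-1)/m\ge 1/2$. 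Summing the three pieces gives exactly $q\log(1/q)+(1+2/\sqrt{\ep{1}})q$; the edge case $m=1$ is easier because the tail interval is empty and only the first two pieces survive.

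The main obstacle is securing the sharp constant on the tail piece: one must keep the exponent as $m-1$ rather than crudely degrading it to $m$ or $m/2$, and simultaneously control the prefactor $m/(m-1)$ uniformly over $m\ge 2$. This is why the split point must be precisely $t=1/q$, so that the bound $1/t\le q$ is exactly tight enough, and why the inequality $(m-1)/m\ge 1/2$ for $m\ge 2$ is essential so that the exponential decay reaches $1/\sqrt{\ep{1}}$. Once this decomposition is in place, everything else reduces to routine calculus.
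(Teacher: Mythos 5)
Your proof is correct and is essentially the same as the paper's: after the change of variables your three intervals $[0,1]$, $[1,1/q]$, $[1/q,m/q]$ in $t$ correspond exactly to the paper's split at $U_{(1)}<q/m$, $y\in[q,1]$, and $y\in[1,m]$, and the per-piece bounds ($q$, $q\log(1/q)$, and $2q/\sqrt{\ep{1}}$ via $m/(m-1)\le 2$ and $(m-1)/m\ge 1/2$) match. The only cosmetic differences are that you keep the ceiling a little longer before bounding it, and you carry the exact exponent $(m-1)/m$ one step further before weakening to $1/2$.
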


This proves the bound for FDR in Theorem~\ref{thm:robustfdr}. Without a lower bound on the number of discoveries, we suffer an extra additive term of $q\log(1/q)$.  Here comes an explanation how does this term emerge. Let $p^0_{\min}$ be the smallest $p$-value from all the $m_0$ true null hypotheses and, to be the most adversarial, all the false null $p$-values are set to zero. Then an adversary can reject $R - 1$ false null hypotheses, and then the true null hypothesis with $p$-value $p^0_{\min}$, where $R = \ceil{mp^0_{\min}/q}$. For large $m_0$, $E:=m_0p^0_{\min}$ is asymptotically distributed as exponential with mean 1. If $m_0 \approx m$, observe that
\[
\frac1{R} \approx \frac{q}{mp^0_{\min}} = \frac{qm_0}{mE} \approx \frac{q}{E}\,.
\]
The partial expectation of the last term $q/E$ on $[q, \infty)$ is approximately
\[
\int_{q}^{\infty}\frac{q}{u}\ep{-u} \d u = q\log\frac1{q} + O(q)\,.
\]
This justifies the logarithmic term $\log\frac1{q}$.

\subsection{Controlling $\fdr^k$} 
\label{sec:fdr-contr-large}
FDR control comes naturally for studies where a large number of rejections are expected to be made.  Provided
with this side information, we can improve the constant $C$ in
(\ref{eq:fdr2}) to $1$ asymptotically for $R \gg 1$. As mentioned
earlier, we will provide an upper bound on $\fdr^k$.



\begin{theorem}\label{thm:stronglyrobustfdr}
Assume joint independence of the true null test statistics. Then any procedure adaptive to the \BHq critical values obeys
\begin{equation}\label{eq:fdrk}
\fdr^k \leq \left( 1+2/\sqrt{qk} \right) q \, .
\end{equation}
\end{theorem}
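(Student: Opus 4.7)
The plan is to repeat the adversarial reduction used for $\fdr_2$, this time leveraging the stronger hypothesis $R \geq k$, and to finish with a martingale maximal inequality. First, set $V(t) := |\{i \in \mathscr{N}_0 : p_i^0 \leq t\}|$; by adaptivity every rejected $p$-value lies in $[0, qR/m]$, so $V \leq V(qR/m)$ and hence $V/R \leq V(qR/m)/R = (q/m)\, V(t)/t$ evaluated at $t = qR/m$. Since $R \geq k$ forces $t \geq qk/m =: t_0$, taking expectations yields
\[
\fdr^k \leq \tfrac{q}{m}\, \E\Big[\sup_{t \geq t_0} V(t)/t\Big].
\]
Under the true-null independence assumption I would couple so that $V(t) \leq \sum_{i=1}^{m_0} \mathbf{1}\{U_i \leq t\}$ pointwise with $U_i$ \iid Uniform$(0,1)$, reducing to the uniform case with $m_0 \leq m$.

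The next step is to exploit the reverse-martingale structure of $V(t)/t$. Given $\mathcal F_t := \sigma(V(s) : s \geq t)$, the $V(t)$ points in $[0,t]$ are conditionally \iid Uniform$(0,t)$, so $\E[V(s)/s \mid \mathcal F_t] = V(t)/t$ for $s < t$. Equivalently, in the time-reversed index $V(\cdot)/\cdot$ is a forward martingale from $V(1)/1 = m_0$ (deterministic) to $V(t_0)/t_0$, whose endpoint has variance $m_0(1-t_0)/t_0$. The crucial move is to apply Doob's $L^2$ inequality to the \emph{centered} process $N(t) := V(t)/t - m_0$, not to $V(t)/t$ itself: Doob yields $\E[\sup_{t \geq t_0} N(t)^2] \leq 4 m_0(1-t_0)/t_0$, and since $N(1) = 0$ forces $\sup N \geq 0$, Jensen's inequality gives $\E[\sup N(t)] \leq 2\sqrt{m_0(1-t_0)/t_0} \leq 2m/\sqrt{qk}$. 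Combining with $\sup V(t)/t = m_0 + \sup N(t)$ and multiplying through by $q/m$ produces the claimed bound $\fdr^k \leq (1 + 2/\sqrt{qk})q$.

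The hard part will be recognizing the necessity of centering before invoking Doob: a direct application of Doob's $L^2$ to $V(t)/t$ gives only the bound $2 m_0 + O(m/\sqrt{qk})$, yielding $\fdr^k \leq 2q + o(q)$ and missing the sharp leading constant. Centering succeeds precisely because the forward-time martingale starts at the deterministic value $m_0$, so the fluctuations are mean-zero and controlled by $\sqrt{\mathrm{Var}(V(t_0)/t_0)}$, which is small exactly when $qk$ is large. A minor technicality is that $R$ takes only integer values in $\{k, k+1, \ldots, m\}$, so the discrete supremum is bounded by its continuous counterpart on $[t_0, 1]$; this continuous relaxation is what enables the reverse-martingale machinery to apply.
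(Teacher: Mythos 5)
Your proof is correct and follows essentially the same route as the paper's. The paper also reduces $\fdr^k$ to controlling $\E\big[\max_{k\le j\le m} V_j/j\big]$ with $V_j=\#\{i:U_i\le qj/m\}$, establishes the backward-martingale identity $\E(W_j\mid W_{j+1})=W_{j+1}$ for $W_j=V_j/j$ (your $V(t)/t$ in reversed time, up to the scaling $V(t)/t=(m/q)W_j$ at $t=qj/m$), and applies the $L^2$ maximal inequality to the centered object --- the paper uses the non-negative backward submartingale $(W_j-q)_+$ where you use $|N(t)|$ for $N(t)=V(t)/t-m_0$, which is the same device. Two small remarks: (i) your observation that uncentered Doob loses a factor of~$2$ is exactly why the paper works with $(W_j-q)_+$ rather than $W_j$; (ii) your claim that the continuous relaxation is ``what enables the reverse-martingale machinery'' overstates matters --- the paper applies the discrete backward-submartingale maximal inequality directly over $j\in\{k,\ldots,m\}$ with no continuous-time detour, and this is arguably cleaner since it sidesteps any path-regularity considerations.
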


\begin{proof}[Proof of Theorem~\ref{thm:stronglyrobustfdr}]
Similar to the argument for bounding $\fdr$ and $\fdr_2$, it suffices to consider the \textit{most adversarial scenario}, where it always holds that
\[
V \le \#\Big{\{}i \in \mathscr{N}_0: p_i \le \frac{qR}{m}\Big{\}}\,,
\]
which gives an upper bound on FDP:
\begin{equation}\label{eq:high_power_basis}
\mathrm{FDP} \le \max_{R \le j \le m}\frac{\# \{ i \in \mathscr{N}_0: p_i \le qj/m \}} {j} \,.
\end{equation}
Similar to what has been argued previously, the above inequality still
holds if all the null $p$-values are $m$ \iid uniform on $(0, 1)$. This
observation leads to the proof of~(\ref{eq:fdrk}) by showing the
following inequality, again by applying tools from Martingale theory
\[
\E\left[ \max_{k \le j \le m}\frac{\#\big{\{}1 \le i \le m: U_i \le \frac{qj}{m}\big{\}}}{j} \right] \le \big(1 + 2/\sqrt{qk}\big)q\,.
\]

Denote by as usual $V_j = \#\{1 \le i \le m: U_i \le \frac{qj}{m}\}$ and $W_j = \frac{V_j}{j}$. Conditional on $W_{j+1}$, $V_{j+1}$ of $U_i$ are uniformly distributed on $[0, q(j+1)/m]$. Hence the conditional expectation of $V_j$ given by $W_{j+1}$ is 
\[
\E(V_j|W_{j+1}) = \frac{V_{j+1}\frac{qj}{m}}{\frac{q(j+1)}{m}} = \frac{jV_{j+1}}{j+1}\,,
\]
which is equivalent to
\[
\E(W_j|W_{j+1}) = W_{j+1}\,.
\]
Thus, $(W_j - q)_+$ is a backward submartingale, which allows us to use the martingale $\ell^2$ maximum inequality:
\[
\E \left[ \max_{k \le j \le m} (W_j-q)_+ \right]^2 \le \left( \frac{2}{2-1} \right)^2 \E (W_{k}-q)_+^2 \le 4\E(W_{k}-q)^2 = \frac{4q(1-qk/m)}{k} < \frac{4q}{k}\,.
\]
Last, Jensen's inequality gives
\[
\E \left[ \max_{k \le j \le m} W_j \right] \le q + \E \left[ \max_{k \le j \le m} (W_j - q)_+ \right] \le q + \sqrt{\E \left[ \max_{k \le j \le m} (W_j-q)_+ \right]^2} \le q + 2\sqrt{\frac{q}{k}}\,.
\]
\end{proof}


\section{Private Adaptive Procedures}
\label{sec:privfdr}
One alternative interpretation of Theorem~\ref{thm:robustfdr} is that \BHsmall is robust with respect to small perturbation of $p$-values. That is if we add small enough noise to the $p$-values and then apply
\BHsmall procedure, the FDR can still be controlled within the bound given in the theorem. It turns out the additive sensitivity of $p$-values might be large, but the relative change is much smaller.

Now the important question is how to add noise.
In the literature, often additive noise is used to guarantee
privacy. But this would not work well for $p$-values as \BHsmall
considers the $p$-values in the range of $O(1/m)$ but a simple
analysis shows that the sensitivity of some standard $p$-values can be
as large as $1/\sqrt{n}$, which can be much larger than $1/m$.  It
turns out that while the additive sensitivity might be large, the
relative change is much smaller.
This motivates us to consider multiplicative sensitivity.  Hence we
will add multiplicative noise (or additive noise to the logarithm of
$p$-values).  Together with the private top-$k$ algorithm, this gives
us the private FDR algorithm.

For the ease of presentation, we assume that we are given an
upper bound $k$ of the number of rejections. The parameter $k$ should
be comparable to the number of true rejections and small compared to
$m$.  It is easy to adapt our algorithm to the case when $k$ is not
given, for example, by the standard doubling trick.  This only
incurs an extra logarithmic factor in our bound.

\subsection{Sensitivity of $p$-values}\label{sec:prelim:sensitivity}

Assume the input to our private FDR algorithm is an $m$-tuple of
$p$-values $p=(p_1,\dots,p_m)$ obtained by running $m$ statistical
tests on a dataset~$\dbax$.  Motivated by the normal approximation (or
related $\chi^2$ approximation) frequently used in computing $p$-values,
the change in a $p$-value caused by the addition or deletion of the data
of one individual is best measured multiplicatively; however, when the
$p$-value is very small the relative change can be very large. This
gives rise to the following definition of (truncated) multiplicative
neighborhood.
\begin{definition}[$(\eta,\nu)$-neighbors]
\label{def:multi_sensi}
Tuples $p=(p_1,\dots, p_m), p'=(p'_1,\dots,p'_m)$ are {\em $(\eta,\nu)$-neighbors} if, for all $1\leq i\leq m$,
either $p_i,p_i'<\nu$, or $\ep{-\eta} p_i \leq p'_i \leq \ep\eta p_i$. 
\end{definition}


The privacy of our FDR algorithm will be defined with respect to such
neighborhood. Next we will explain that some standard $p$-values
computed on neighboring databases are indeed $(\eta,\nu)$-neighbors
for small $\eta$ and $\nu$.  We will give an intuitive explanation
using Gaussian approximation but omit the proof details. Consider a
database consisting of the records of $n$ people and a hypothesis $H$
to be tested, where each person contributes a statistic $t_i, i = 1,
\ldots, n$ with $|t_i|\leq B$ (for example, $t_i$ is the number of
minor alleles for a given SNP.) In many interesting cases, the
sufficient statistic for testing the hypothesis is $T = t_1 +
\cdots + t_n$, and under the null hypothesis $H$, each $t_i$ has mean
$\mu$ and variance $\sigma^2$.  Then $(T-n\mu)/(\sqrt{n}\sigma)$ is
asymptotically distributed as standard normal variable.  Assuming $T$
tends to be larger under the alternative hypothesis, we can
approximately compute $p$-value $p(T)=\Phi(-(T-n\mu)/(\sqrt{n}\sigma))$,
where $\Phi$ is the cumulative distribution function of standard Gaussian.  Consider a neighboring
database where person $i$ is replaced, so $T'-T=t'_i-t_i$. Writing
$p'=p(T')$ and invoking that $\Phi(-x) \approx \frac1{x}\phi(x) =
\frac1{x\sqrt{2\pi}}\ep{-\frac{x^2}{2}}$ for large $x > 0$, 
we can show that $p(T)$ is $(\eta,\nu)$ multiplicative sensitive for
$\eta \approx B\sqrt{2\log(1/\nu)/n}/\sigma = O(\sqrt{\log(1/\nu)/n}/\sigma)$. 


\subsection{A private FDR algorithm}
To achieve privacy with respect to $(\eta,\mu)$-neighborhood, we apply
\peel with properly scaled noise to the logarithm of the input
$p$-values (see Step~1 of Algorithm~\ref{algo:pbh2}). Denote by $\Delta_k = (1+o(1))\sqrt{k\log(1/\delta)}\log m/\e$ the accuracy bound provided by \peel.  Then we run \BHsmall with cutoff values set as $\alpha_j'=\log (\alpha_j+\nu) + \eta \Delta_k$. The details are
described in Algorithm~\ref{algo:pbh2}.
\begin{algorithm}[ht]
\caption{Differentially private FDR-controlling algorithm}\label{algo:pbh2}
\begin{algorithmic}[1]
\REQUIRE $(\eta, \nu)$-sensitive $p$-values $p_1, \cdots, p_m$ and $k\geq 1$ and $\eps,\delta$
\ENSURE a set of up to $k$ rejected hypotheses
\STATE for each $1\leq i\leq m$, set $x_i = \log (\max\{ p_i,\nu \})$
\STATE apply \peel to $x_1,\ldots, x_m$ with noise $\lap(\eta \sqrt{k\log(1/\delta)}/\e)$ to obtain $(i_1,y_1), \ldots, (i_k,y_k)$
\STATE apply \BHsmall{} to $y_1,\ldots, y_k$ with cutoffs $\alpha'_j$ for $1\leq j\leq k$ and reject the corresponding hypotheses
\end{algorithmic}
\end{algorithm}

Our main result is that this algorithm is differentially private and controls FDR, while maintaining a descent power. In particular, the second and third conclusions serve as, respectively, formal versions of Theorems~\ref{thm:PrivateFDRInformal} and \ref{thm:PrivatePowerInformal}.
\begin{theorem}\label{thm:main}
Algorithm~\ref{algo:pbh2} obeys:
\begin{enumerate}
\item[(a)] It is $(\e,\delta)$-differentially private;
\item[(b)] If \BHsmall rejects $k'\leq k$ hypotheses, Algorithm~\ref{algo:pbh2} rejects at least $k'$ hypotheses with probability $1-o(1)$;
\item[(c)] Suppose $\nu=o(1/m)$, then the FDR of Algorithm~\ref{algo:pbh2} satisfies the bounds in Theorem~\ref{thm:robustfdr} with $q$ replaced by  $\ep{\eta \Delta_k}(1+o(1)) q$.
\end{enumerate}
\end{theorem}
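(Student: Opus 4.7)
The plan is to address parts (a)--(c) in turn, reducing privacy to the standard analysis of \peel{}, power to its accuracy guarantee (Theorem~\ref{thm:peeling}), and FDR control to Theorem~\ref{thm:robustfdr} via an adaptivity argument.

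For part (a), the central observation is that the map $p \mapsto x$, with $x_i = \log\max(p_i,\nu)$, has coordinatewise additive sensitivity at most $\eta$ under $(\eta,\nu)$-neighborhood: if $p_i,p_i' < \nu$ then $x_i = x_i' = \log\nu$, whereas the multiplicative closeness in Definition~\ref{def:multi_sensi} translates to $|x_i - x_i'| \le \eta$ directly, with the mixed boundary case absorbed because truncation at $\nu$ can only shrink differences. Step~2 is therefore \peel{} on a vector of coordinate sensitivity $\eta$, which by the analysis underlying Theorem~\ref{thm:peeling} (rescaled by $\eta$) is $(\e,\delta)$-differentially private, and Step~3 is post-processing of the Step~2 output.

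For part (b), assume \BHsmall at level $q$ on the raw $p$-values rejects $k' \le k$ hypotheses, i.e.\ $p_{(j)} \le \alpha_j$ for $j \le k'$. By Theorem~\ref{thm:peeling} (with the noise parameter rescaled by $\eta$), with probability $1-o(1)$ the peel indices satisfy $x_{i_j} - x_{(j)} \le \eta\Delta_k$ for all $j \le k$, and the fresh Laplace noise of scale $\eta\sqrt{k\log(1/\delta)}/\e = o(\eta\Delta_k)$ has magnitude $o(\eta\Delta_k)$ on the same event. Hence for $j \le k'$ the $j$-th sorted output satisfies $y_j \le x_{(j)} + \eta\Delta_k \le \log(\alpha_j + \nu) + \eta\Delta_k = \alpha'_j$, so \BHsmall{} does not halt before rejecting at least $k'$ hypotheses.

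Part (c) is the main step. Suppose the algorithm rejects $R$ hypotheses; the \BHsmall{} check gives $y_R \le \alpha'_R = \log(\alpha_R + \nu) + \eta\Delta_k$ for the $R$-th sorted output. Writing $y_R = x_{i_R} + g'_R$ and using $|g'_R| = o(\eta\Delta_k)$ on the high-probability event above, we obtain $x_{i_R} \le \log(\alpha_R + \nu) + (1+o(1))\eta\Delta_k$. Since $x_{i_R} = \log\max(p_{i_R},\nu) \ge \log p_{i_R}$, exponentiating and using $\nu = o(1/m)$ together with $\alpha_R = qR/m$ yields
\[
p_{i_R} \;\le\; (\alpha_R + \nu)\,\mathrm{e}^{(1+o(1))\eta\Delta_k} \;=\; (1+o(1))\,\mathrm{e}^{\eta\Delta_k}\,\frac{qR}{m}.
\]
Thus, on the high-probability event that the \peel{} accuracy bound holds, the algorithm is adaptive (in the sense of Definition~\ref{def:adaptive}) to the BHq critical values at level $\tilde q = (1+o(1))\mathrm{e}^{\eta\Delta_k} q$, and Theorem~\ref{thm:robustfdr} delivers the FDR bound with $q$ replaced by $\tilde q$. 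The contribution to FDR from the $o(1)$-probability failure event is at most $o(1)$, which is absorbed into the $1+o(1)$ slack on $\tilde q$.

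The main obstacle is the bookkeeping between additive accuracy in the log-domain and multiplicative accuracy in the $p$-value domain, and in particular keeping the two sources of Laplace noise (the within-peel noise, absorbed into $\eta\Delta_k$, and the fresh noise of strictly smaller order) clearly separated so the slack in the cutoff $\alpha'_j$ is used only once. Once these are disentangled, the argument is a clean composition of Theorem~\ref{thm:peeling} with Theorem~\ref{thm:robustfdr}.
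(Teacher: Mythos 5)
Your proposal is correct and follows essentially the same route as the paper's proof: privacy via the coordinatewise log-sensitivity bound $|x_i-x'_i|\le\eta$ and the privacy of \peel, power via the accuracy guarantee of Theorem~\ref{thm:peeling}, and FDR control via verifying adaptivity to the \BHq critical values at the inflated level $(1+o(1))\ep{\eta\Delta_k}q$ and invoking Theorem~\ref{thm:robustfdr}. Your treatment is if anything slightly more explicit than the paper's on two minor points — the mixed boundary case of truncation in part (a), and separating the fresh Laplace noise $g'_j$ from the peel accuracy term in parts (b)–(c) — but these are refinements of the same argument, not a different one.
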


\begin{proof}
\begin{enumerate}
\item[(a)] For two $(\eta,\nu)$ neighbor $p,p'$, by definition for each $i$,
either $p_i,p_i'\leq \nu$ or $e^{-\eta} p_i \leq p_i' \leq e^{\eta}
p_i$.  In either case, we have that $|x_i' - x_i|\leq \eta$. Hence the
privacy of Algorithm~\ref{algo:pbh2} follows from the privacy of
\peel.

\item[(b)] By Theorem~\ref{thm:peeling}, we have that with probability $1-o(1)$,
$y_j\leq x_{(j)}+\eta \Delta_k$. Hence if for $1\leq j\leq k'$,
$p_{(j)}\leq \alpha_j$, then we have $x_{(j)}\leq \log
(\alpha_j+\nu)$. Thus $y_j\leq \log (\alpha_j+\nu) + \eta\Delta_k =
\alpha_j'$. Hence Algorithm~\ref{algo:pbh2} rejects at least $k'$ hypotheses as well. 

\item[(c)] Suppose that the algorithm rejects the set of hypotheses $t_1,
\ldots, t_{k'}$, then we have that $y_{t_j} \leq \alpha_j'$. By Theorem~\ref{thm:peeling} with high probability the corresponding $p$-value is bounded by $\ep{O(\eta \Delta_k)} (\alpha_j+\nu)$.  By that $\alpha_j = jq/m$, Algorithm~\ref{algo:pbh2} is an adaptive procedure with respect to the cutoff
\[
(q'/m,2q'/m,\ldots,kq'/m),
\]
where $q' = \ep{O(\eta\Delta_k)}(1+o(1))q$. Then the bounds in Theorem~\ref{thm:robustfdr} apply by plugging in $\ep{O(\eta\Delta_k)}(1+o(1))q$ in place of $q$.
\end{enumerate}
\end{proof}

For the binary database with independent statistics, take $\eta=O(\sqrt{\log m/n})$ and $\nu=1/m^2$. Then, from the discussion in Section~\ref{sec:prelim:sensitivity} we get
\begin{corollary}\label{cor:main}
Algorithm~\ref{algo:pbh2} is $(\e,\delta)$-differentially private. In addition, if $k\ll n/\log^{O(1)}(m)$, with probability $1-o(1)$, it rejects at least the same amount of hypotheses as in Algorithm~\ref{algo:smallbh} and controls FDR at $q\log(1/q) + C(1+o(1))q$, $\fdr_2$ at $C(1+o(1))q$, and $\fdr_k$ at $(1+o(1))q$.
\end{corollary}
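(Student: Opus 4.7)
The plan is to show that this corollary follows by a direct specialization of Theorem~\ref{thm:main} once the parameters of the statement are substituted and the resulting factor $\ep{\eta\Delta_k}$ is shown to be $1+o(1)$. I will verify the three bulleted conclusions in turn, each corresponding to one of the three parts (a)--(c) of Theorem~\ref{thm:main}.

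First I would check that the hypotheses of Theorem~\ref{thm:main} are met with the chosen parameters. The discussion in Section~\ref{sec:prelim:sensitivity} shows that the standard $p$-values arising from an independent bounded binary statistic are $(\eta,\nu)$-multiplicatively sensitive with $\eta=O(\sqrt{\log(1/\nu)/n})$. Taking $\nu=1/m^2$ then gives $\eta=O(\sqrt{\log m/n})$ and $\nu=o(1/m)$, so the $p$-value inputs fed to Algorithm~\ref{algo:pbh2} satisfy the required multiplicative-sensitivity assumption and the assumption on $\nu$ in Theorem~\ref{thm:main}(c). Privacy, i.e.\ the first sentence of the corollary, is then immediate from Theorem~\ref{thm:main}(a).

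Next, for the FDR bounds, I would compute
\[
\eta\Delta_k = O\!\left(\sqrt{\tfrac{\log m}{n}}\right)\cdot(1+o(1))\,\frac{\sqrt{k\log(1/\delta)}\,\log m}{\e},
\]
and observe that, treating $\e,\delta$ as constants (or absorbing them into the polylog factor), the condition $k\ll n/\log^{O(1)}(m)$ is precisely what forces $\eta\Delta_k=o(1)$, hence $\ep{\eta\Delta_k}=1+o(1)$. Substituting $q':=\ep{\eta\Delta_k}(1+o(1))q=(1+o(1))q$ into the bounds of Theorem~\ref{thm:robustfdr} (which Theorem~\ref{thm:main}(c) invokes) gives $\mathrm{FDR}\le q'\log(1/q')+Cq'=q\log(1/q)+C(1+o(1))q$, $\mathrm{FDR}_2\le Cq'=C(1+o(1))q$, and $\mathrm{FDR}_k\le(1+2/\sqrt{q'k})q'=(1+o(1))q$, where the last equality additionally uses $q'k\to\infty$ (which we may freely assume in the $\mathrm{FDR}_k$ bound, since otherwise the claim $(1+o(1))q$ is vacuous). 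The power statement, that Algorithm~\ref{algo:pbh2} rejects at least as many hypotheses as \BHsmall with probability $1-o(1)$, is a direct invocation of Theorem~\ref{thm:main}(b), applied with $k'$ equal to the number of rejections of Algorithm~\ref{algo:smallbh} (capped at $k$).

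The only genuine work is the arithmetic verification that $\eta\Delta_k=o(1)$ under $k\ll n/\log^{O(1)}(m)$; everything else is routine bookkeeping. I expect the main (mild) obstacle to be tracking the polylogarithmic exponent carefully: the factor $\log m$ in $\Delta_k$ is squared against the $\sqrt{\log m/n}$ from $\eta$, and there is an additional $\log(1/\delta)$, so one needs a polylog factor of order at least $\log^3 m\cdot\log(1/\delta)/\e^2$ inside the $n/\log^{O(1)}(m)$ requirement; this is absorbed into the $O(1)$ exponent in the statement of the corollary.
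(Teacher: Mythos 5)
Your proposal is correct and follows exactly the route the paper intends: the corollary is stated as a direct specialization of Theorem~\ref{thm:main} with $\eta=O(\sqrt{\log m/n})$ and $\nu=1/m^2$, and the paper leaves the bookkeeping implicit ("from the discussion in Section~\ref{sec:prelim:sensitivity} we get"). You supply precisely that bookkeeping: verifying $\nu=o(1/m)$, showing $\eta\Delta_k=o(1)$ under $k\ll n/\log^{O(1)}(m)$, and substituting $q'=(1+o(1))q$ into the bounds of Theorem~\ref{thm:robustfdr}, with the power claim coming from part (b). Your observation that the $\fdr_k$ bound $(1+o(1))q$ requires $q'k\to\infty$, which the corollary must be implicitly assuming, is a fair point that the paper also glosses over; likewise, in writing $q'\log(1/q')=q\log(1/q)+o(q)$ you are implicitly using that $q$ is a fixed constant so $\log(1/q)=O(1)$, which is the intended regime. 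No gaps.
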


One drawback of the above algorithm is that it needs to run the peeling algorithm which takes $\wO(km)$ time. This can be expensive
for  large $k$.  In the next section, we show the privacy of
one-shot algorithm $\oneshot$. By replacing
\peel with $\oneshot$ in Algorithm~\ref{algo:pbh2}, we can obtain the
\emph{Fast Private FDR algorithm} which has essentially the same quality bound but
with running time $\wO(k+m)$.


\section{One-shot mechanism for reporting top-$k$}
\label{sec:mink}

In the one-shot mechanism $\oneshot$, we add noise $\lap(\lambda)$
once to each value and report the indices of the minimum-$k$ noisy
values (Algorithm~\ref{algo:oneshot}). Here the input to our algorithms is $m$ counts $x = (x_1,\ldots,x_m)$ which are the marginals of database $D$. So two set of counts $x$ and $x'$, representing respectively the marginals of two neighboring databases $D$ and $D'$, satisfy $\|x-x'\|_\infty \leq 1$.

Using a coupling argument, it is easy to show by setting $\lambda =
O(k/\e)$, the one-shot mechanism is $(\e,0)$-differentially private.
\begin{lemma}\label{lem:oneshotk}
Algorithm~\ref{algo:oneshot} is $(\e,0)$-differentially private if we set $\lambda = 2k/\e$. 
\end{lemma}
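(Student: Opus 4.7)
The plan is a shift-coupling of the Laplace noise, generalising the standard pure-DP analysis of Report Noisy Min (Algorithm~\ref{algo:rnm}) to the top-$k$ setting. Fix two adjacent inputs $x, x'$ with $\|x - x'\|_\infty \le 1$, let $\delta_i = x_i - x'_i$, and fix any target output set $S$ of size $k$. I would begin by writing
\[
P_x(\oneshot = S) \;=\; \int_{\{g \,:\, \text{top-}k(x+g) = S\}} \prod_{i=1}^m f_\lambda(g_i) \, dg,
\]
and applying the coordinatewise translation $g \mapsto g - \delta$ to map the integration region onto $\{g : \text{top-}k(x'+g) = S\}$, which matches the domain of $P_{x'}(\oneshot = S)$. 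After this substitution, the ratio of integrands is a product of per-coordinate Laplace density shifts, each pointwise at most $e^{|\delta_i|/\lambda} \le e^{1/\lambda}$.

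The subtlety, and the main obstacle, is that the naive product bound $e^{\|\delta\|_1/\lambda}$ can reach $e^{m/\lambda}$, because $\|x-x'\|_\infty \le 1$ does not restrict $\|x-x'\|_1$. To bring the bound down to $e^{2k/\lambda}$, I would parameterise the event further by the index $i^* \in S$ attaining $y_{(k)}$ and the threshold value $t^* = y_{i^*}$. Conditional on $(i^*, t^*)$, the integrand factors into: one Laplace density $f_\lambda(t^* - x_{i^*})$; $k-1$ CDF factors $F_\lambda(t^* - x_i)$ enforcing $y_i \le t^*$ for the remaining indices of $S$; and $m-k$ survival factors $\bar F_\lambda(t^* - x_j)$ enforcing $y_j > t^*$ for indices outside $S$. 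A second change of variables $t^* \mapsto t^* + \delta_{i^*}$ aligns the $f_\lambda$ factor exactly with its $x'$ counterpart, while each remaining factor sees a residual argument shift $\eta_i = \delta_i - \delta_{i^*}$ of magnitude at most $2$. By log-concavity of the Laplace distribution (hazard rate $\le 1/\lambda$), each CDF and each survival ratio is then bounded by $e^{|\eta|/\lambda} \le e^{2/\lambda}$.

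The heart of the argument, and where the real work lies, is showing that the $m-k$ survival-function factors collectively contribute only $e^{O(k)/\lambda}$ rather than $e^{O(m)/\lambda}$. The idea is to couple the noise on the unselected indices with the $t^*$ integration variable, exploiting that each tail constraint $y_j > t^*$ is insensitive to a small shift in $x_j$ once we have already absorbed the shift into $t^*$. Granting this absorption, only the $k$ density and CDF contributions on $S$ remain, each of size at most $e^{2/\lambda}$, giving a total bound of $e^{2k/\lambda}$; substituting $\lambda = 2k/\epsilon$ yields the claimed $(\epsilon,0)$-differential privacy. This tail-absorption step is precisely where the ``coupling argument'' the paper alludes to is doing its work, and controlling it carefully (rather than per-coordinate) is the one subtle point; the surrounding structural decomposition and density-ratio estimates are routine.
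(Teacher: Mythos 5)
Your proposal has a gap at the step you yourself flag as ``where the real work lies.'' After the change of variables $t^* \mapsto t^* + \delta_{i^*}$, \emph{every} one of the $m-1$ remaining factors---the $m-k$ survival factors on $[m]\setminus S$ as well as the $k-1$ CDF factors on $S$---sees a residual argument shift $\delta_i - \delta_{i^*}$ of magnitude up to $2$, and the hazard-rate bound gives $e^{2/\lambda}$ per factor. Without a concrete argument controlling the survival-factor product you obtain $e^{2(m-1)/\lambda}$, not $e^{2k/\lambda}$. The asserted ``tail absorption'' (that the $m-k$ survival factors collectively contribute only $e^{O(k)/\lambda}$) is precisely what is \emph{not} available for pure DP: a per-instance aggregate bound of that kind holds only with high probability over the Laplace noise, which is exactly why the stronger $\lambda = O(\sqrt{k}/\e)$ result of Theorem~\ref{thm:oneshot} is proved only for approximate differential privacy. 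The threshold-conditioning route is the right tool for that harder theorem, but it does not close for Lemma~\ref{lem:oneshotk}.

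The paper's actual proof is much shorter and does not condition on the threshold at all. Let $\mathcal{G}$ and $\mathcal{G}'$ be the sets of noise vectors on which $\oneshot(x)$ and $\oneshot(x')$ respectively output $S$. The key observation is the one-sided containment $\mathcal{G} - 2\cdot\vec{1}_S \subset \mathcal{G}'$, where $\vec{1}_S$ is the indicator vector of $S$: if $x_i + g_i \le x_j + g_j$ for all $i \in S$, $j \notin S$, then $x'_i + (g_i - 2) \le x_i + g_i - 1 \le x_j + g_j - 1 \le x'_j + g_j$. This is a \emph{fixed overshoot} shift of $-2$ supported only on the $k$ coordinates of $S$, chosen independently of the actual per-coordinate differences $\delta_i$; it does not give an exact event match but only a containment, and a containment is all differential privacy needs. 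Its $\ell_1$ cost is $2k$, so comparing Laplace densities over the two regions gives the factor $e^{2k/\lambda} = e^{\e}$ directly. The structural contrast with your plan: you shift by the actual $\delta = x-x'$ to match the two events exactly (cost $\|\delta\|_1$, up to $m$) and then try to repair the cost afterward; the paper instead uses a worst-case shift with $\ell_1$ norm $2k$ supported on $S$ alone and accepts a set inclusion, which is both simpler and exactly what the pure-DP bound needs.
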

\begin{proof}[Proof of Lemma \ref{lem:oneshotk}]
Let $x = (x_1, \ldots, x_m)$ and $x' = (x'_1, \ldots, x'_m)$ be adjacent, i.e., $\|x - x' \|_{\infty} \le 1$. Let $S$ be an arbitrary $k$-subset of $\{1, \ldots, m\}$. Let $\mathcal{G}$ consist of all $(g_1, \ldots, g_m)$ such that $\oneshot$ on $x$ reports $S$. Similarly we have $\mathcal{G}'$ for $x'$. It is clear to see that $\mathcal{G} - 2\cdot \vec{1}_S \subset \mathcal{G}'$, where $\vec{1}_S \in \{0, 1\}^m$ satisfies $\vec{1}_S(i) = 1$ if and only if $i \in S$. Hence we get
\begin{multline}\nonumber
\prob(\oneshot(x) = S) = \int_{\mathcal{G}}\frac1{2^m\lambda^m}\ep{-\frac{\|g\|_1}{\lambda}}\d g \ge \int_{\mathcal{G}' + 2\cdot\vec{1}_S}\frac1{2^m\lambda^m}\ep{-\frac{\|g\|_1}{\lambda}}\d g\\
= \int_{\mathcal{G}'}\frac1{2^m\lambda^m}\ep{-\frac{\|g + 2\cdot\vec{1}_S\|_1}{\lambda}}\d g \ge \int_{\mathcal{G}'}\frac{\ep{-2k/\lambda}}{2^m\lambda^m}\ep{-\frac{\|g\|_1}{\lambda}}\d g  = \ep{-\e}\prob(\oneshot(x') = S)\,.
\end{multline}
On the opposite side, we have $\prob(\oneshot(x) = S) \le \ep{\e}\prob(\oneshot(x') = S)$, which completes the proof since $S$ is arbitrary.
\end{proof}
Our goal is to reduce the dependence on~$k$ to~$\sqrt{k}$ for approximate differential privacy.

\begin{theorem}\label{thm:oneshot}
Take $\e \leq \log(m/\delta)$. There exists universal constant $C>0$
such that if we set $\lambda=C\sqrt{k\log(m/\delta)}/\e$, then
$\oneshot$ is $(\e,\delta)$-differentially private. In addition, with probability
$1-o(1)$, for every $1\leq j\leq k$, $x_{i_j}-x_{(j)} = O(\sqrt{k\log(m/\delta)}\log m/\e)$. 
\end{theorem}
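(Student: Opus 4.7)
The proof splits into the accuracy claim and the privacy claim.

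The accuracy claim is routine: each $g_i \sim \lap(\lambda)$ satisfies $|g_i| \le \lambda \log m$ with probability $1-1/\mathrm{poly}(m)$, so by a union bound $\max_i |g_i| \le \lambda \log m$ holds simultaneously with probability $1-o(1)$. On this event, the noisy ordering of the $y_i = x_i + g_i$ differs from the true ordering only between pairs with $|x_i-x_j| \le 2\lambda \log m$, and hence $x_{i_j} - x_{(j)} \le 2\lambda \log m = O(\sqrt{k\log(m/\delta)}\log m / \e)$ for every $j$, plugging in $\lambda = C\sqrt{k\log(m/\delta)}/\e$.

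For privacy, fix neighboring inputs $x,x'$ and write $d := x-x' \in [-1,1]^m$, and fix a target output $k$-subset $S$. A change of variables aligning $x$ with $x'$ gives
\[
\frac{\P[\oneshot(x)=S]}{\P[\oneshot(x')=S]} \;=\; \mean\!\left[\exp(L(g))\;\Big|\; g \in \mathcal{E}_S^{x'}\right], \qquad L(g) := -\sum_{i=1}^m \frac{|g_i - d_i| - |g_i|}{\lambda},
\]
where $\mathcal{E}_S^{x'}$ is the event that the noisy top-$k$ under $x'$ equals $S$. The standard $(\e,\delta)$-DP reduction then replaces the ratio bound by showing that the ``bad event'' $\{L > \e\} \cap \mathcal{E}_S^{x'}$ has probability at most $\delta \cdot \P[\mathcal{E}_S^{x'}]$, uniformly over $S$. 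Under the unconditional iid Laplace law, each summand of $L$ is bounded by $|d_i|/\lambda \le 1/\lambda$, has mean $O(d_i^2/\lambda^2)$, and variance $O(d_i^2/\lambda^2)$, so Bernstein immediately gives $|L| = \widetilde{O}(\sqrt{m}/\lambda)$, which would require $\lambda = \widetilde{\Theta}(\sqrt{m})/\e$ — too large by a factor of $\sqrt{m/k}$.

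To improve the $\sqrt{m}$ dependence to $\sqrt{k}$, I would carry out the three-step program outlined in the introduction. First, partition $\mathcal{E}_S^{x'}$ by the interface value $V := \max_{i\in S}(x'_i + g_i)$, the $k$-th smallest noisy count. Conditional on $V=v$, the coordinates $\{g_i\}_{i\in S}$ and $\{g_i\}_{i\notin S}$ are \emph{independent} truncated Laplaces with thresholds depending on $v-x'_i$, eliminating the dependence introduced by conditioning on $\mathcal{E}_S^{x'}$. Second, couple the slice for $x$ at level $v$ with the slice for $x'$ at level $v$ by coordinate-wise shifts of magnitude $|d_i|\le 1$. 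Third, on each coupled slice, apply Bernstein to the sum $L$, exploiting the following crucial decay: for a coordinate $i$ with $|v-x'_i| \gg \lambda$, the conditional $g_i$ has $\mathrm{sgn}(g_i)$ essentially deterministic (negative if $i\in S$, positive if $i\notin S$), so $(|g_i-d_i|-|g_i|)/\lambda$ collapses to $\mathrm{sgn}(g_i)\,d_i/\lambda$ with exponentially small slack; the contribution of these ``far'' coordinates aggregates to a deterministic quantity that can be absorbed into a harmless common shift (since adding a constant to every $x_i$ leaves the output law invariant). Only the ``near-interface'' coordinates — those with $|v-x'_i| = O(\lambda\log m)$, of which there are $\widetilde{O}(k)$ since $V$ concentrates within $O(\lambda\log m)$ of the $k$-th order statistic of $x'$ — contribute nontrivial variance, which by Bernstein gives $|L| = O(\sqrt{k\log(m/\delta)}/\lambda) \le \e$ with probability at least $1-\delta$ for $\lambda = C\sqrt{k\log(m/\delta)}/\e$.

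The main obstacle is the third step: quantifying ``far'' versus ``near'' rigorously, handling the integration over the random interface $v$ so that no extra $\log m$ or $\sqrt{m/k}$ factors leak in, and verifying that the deterministic ``far'' contribution genuinely cancels against an allowable common shift in $d$. This is precisely the partition-and-couple-then-concentrate machinery that the paper advertises as the novel technique; once it is carried out, the privacy conclusion, and hence the theorem, follows.
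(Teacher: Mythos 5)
Your accuracy argument is correct and matches the paper's (a routine Laplace tail bound plus union bound). For privacy, your high-level plan is close in spirit to the paper's: both approaches partition on the $k$-th smallest noisy element to collapse the dependence across coordinates, then apply a Bennett/Bernstein-type concentration bound to the log-likelihood-ratio. But there is a genuine gap, which you yourself flag: the entire technical weight of the theorem rests on the ``third step,'' and your proposal for that step does not work as stated.

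Concretely, two issues. First, the conditioning. You condition on the \emph{value} $V$ of the $k$-th smallest noisy count and claim the $g_i$'s become independent truncated Laplaces. They do not, because conditioning on an order statistic (a continuous quantity attained by one of $m$ exchangeable coordinates) leaves residual dependence through which coordinate achieved it. The paper's conditioning is on the \emph{identity} $I_k = j$ of the $k$-th smallest noisy element together with its noise value $g_j$; conditionally, the remaining $g_i$'s are plain (untruncated) iid Laplaces, and the event ``$i$ is selected'' becomes the \emph{independent} Bernoulli event $\{g_i < x_j + g_j - x_i\}$ with probability $q_i = G((x_j + g_j - x_i)/\lambda)$. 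This reduces privacy to a clean statement about independent Bernoullis (Lemma~\ref{lm:dp_q_new}), with no truncated distributions to manage.

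Second, and more seriously, your treatment of the ``far'' coordinates. You claim that for a far coordinate $\mathrm{sgn}(g_i)$ is essentially deterministic, so the contribution $\mathrm{sgn}(g_i)\,d_i/\lambda$ aggregates to a deterministic quantity that ``can be absorbed into a harmless common shift.'' But the far contribution is a signed sum $\sum_{i \in S,\,\mathrm{far}}(-d_i/\lambda) + \sum_{i \notin S,\,\mathrm{far}}(+d_i/\lambda)$, and a common shift $d \mapsto d - c\mathbf{1}$ changes it by $(|S_{\mathrm{far}}| - |(\lnot S)_{\mathrm{far}}|)\,c/\lambda$, which has a coefficient of order $m/\lambda$; choosing $c$ to cancel the far sum then perturbs the near coordinates' $d_i$'s by $O(1)$, potentially outside $[-1,1]$, and the far/near split itself is defined relative to $x'$ and shifts with $c$. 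You correctly identify this as ``the main obstacle'' but do not resolve it; a proof it is not. The paper sidesteps this issue entirely: after reducing to the Bernoulli mechanism $\M(q)$, it dispatches $\sum_i q_i > K \approx 3k$ via Lemma~\ref{lm:poisson_binomial} (the output cannot be a $k$-set except with probability $\delta$), and in the remaining regime the constraint $\sum_i q_i \le K$ \emph{automatically} forces the variance $\sigma^2 = \sum_i q_i(1-q_i)\log^2(\cdot) = O(c^2 k)$ — there is nothing deterministic to cancel and no near/far dichotomy needed. Bennett's inequality then closes the argument with $\sigma^2 h(At/\sigma^2)/A^2 = \Omega(\log(1/\delta))$. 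This is the content you would need to supply, and it is not implied by the sketch you gave.
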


Unlike in the peeling approach, in which an ordered set is returned,
in $\oneshot$, only a subset of $k$ elements, but not their ordering,
is returned.  The privacy proof crucially depends on this. If we would
also like to report the values, we can report the noisy values by adding random noise \emph{freshly} sampled from $\lap(O(\sqrt{k\log(1/\delta)}/\e))$
to each value of those $k$ elements.
\begin{algorithm}[ht]
\caption{The one-shot procedure $\oneshot$ for privately reporting minimum-$k$ elements}\label{algo:oneshot}
\begin{algorithmic}[1]
\REQUIRE $m$ values $x_1, \cdots, x_m$, $k\geq 1$, $\lambda$
\ENSURE $i_1, \ldots, i_k$
\FOR {$i=1$ to $m$}
  \STATE set $y_i = x_i+g_i$ where $g_i$ is sampled
  i.i.d. from $\lap(\lambda)$
\ENDFOR
\STATE sort $y_1,\ldots, y_m$ from low to high, $y_{(1)} \le y_{(2)} \le \cdots \le y_{(m)}$
\STATE return the set $\{(1), \ldots, (k)\}$
\end{algorithmic}
\end{algorithm}

In Theorem~\ref{thm:oneshot}, the quality bound on $x_{i_j} - x_{(j)}$ is an immediate consequence from the properties of exponential random variables. The claim on the privacy part is equivalent to the lemma below.
\begin{lemma}\label{lm:oneshot_private}
For any $k$-subset $S$ of $\{0, 1\}^m$ and any adjacent $x, x'$, we have 
\[
\P(\oneshot(x) \in S) \le \ep{\e}\P(\oneshot(x')\in S) + \delta\,.
\]
\end{lemma}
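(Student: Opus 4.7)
The plan is to use the privacy-loss characterization of approximate differential privacy: it suffices to show that the ``bad'' set $B = \{T : R_T > e^\e\}$, where $R_T := \P_x(\oneshot=T)/\P_{x'}(\oneshot=T)$, has probability at most $\delta$ under $\oneshot(x)$. Indeed any event $S$ then satisfies $\P_x(\oneshot \in S) \le e^\e \P_{x'}(\oneshot \in S) + \delta$ by splitting $S$ into $S \cap B$ and $S \setminus B$.

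First I would set up the likelihood ratio. Writing $\P_x(\oneshot = T) = \int_{\Omega_T(x)} f(g)\,dg$ with $f$ the product Laplace density on $\mathbb{R}^m$ and $\Omega_T(x) = \{g : x_i + g_i < x_j + g_j\text{ for all } i \in T,\, j \notin T\}$, the translation $g \mapsto h := g - \Delta$ with $\Delta := x' - x$, $\|\Delta\|_\infty \le 1$, is a bijection $\Omega_T(x) \to \Omega_T(x')$. A change of variables then gives
\[
R_T \;=\; \E_{h \sim f \,\mid\, h \in \Omega_T(x')}\bigl[e^{-Z(h)/\lambda}\bigr],\qquad Z(h) \;=\; \sum_{i=1}^m \bigl(|h_i + \Delta_i| - |h_i|\bigr).
\]
The problem thus reduces to controlling the lower tail of $Z$ under the conditional distribution $h \mid h \in \Omega_T(x')$, uniformly over typical $T$.

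Each summand of $Z$ is bounded by $|\Delta_i| \le 1$, and for unconditional symmetric Laplace $h_i$ it equals $-\Delta_i \cdot \operatorname{sgn}(h_i)$ up to a correction of order $\Delta_i^2/\lambda$. An \emph{unconditional} Bernstein bound would only give $|Z| = O(\sqrt{m \log(m/\delta)})$, forcing the unwanted scaling $\lambda = O(\sqrt{m}/\e)$. The gain to $\lambda = O(\sqrt{k \log(m/\delta)}/\e)$ must exploit that $\oneshot$ releases an \emph{unordered} $k$-set: inside $\Omega_T(x')$ only the $k(m-k)$ crossing comparisons between $T$ and $[m]\setminus T$ are enforced, while rankings within each side are free. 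Following the paper's hint, I would partition $\Omega_T(x')$ into pieces indexed (for example) by $i^\star = \argmax_{i\in T}(x'_i + h_i)$, $j^\star = \argmin_{j\notin T}(x'_j + h_j)$ and a bucketing of the gap $(x'_{j^\star} + h_{j^\star}) - (x'_{i^\star} + h_{i^\star})$; within each piece the remaining coordinates decouple into a product conditional distribution. I would then couple each such piece to its translate $h \mapsto h - \Delta$ inside $\Omega_T(x)$ and apply a Chernoff/Bernstein bound piece-wise to conclude that $Z \ge -\lambda\e$ except with probability at most $\delta/N$, where $N = m^{O(1)}$ bounds the number of pieces.

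The main obstacle is designing the partition so that (i) on each piece the conditional distribution is a product (or close enough for Bernstein to apply with variance proxy $O(k)$ rather than $O(m)$), (ii) the induced coupling produces the correct measure-change factor $e^{-Z/\lambda}$, and (iii) the number of pieces is only polynomial in $m$, so that the union bound costs only a $\log m$ factor in $\lambda$. Once this is carried out, on the good event $\{Z \ge -\lambda\e\}$---which has total $\P_x$-probability at least $1 - \delta$---we have $R_T \le e^\e$, while its complement contributes the $\delta$ slack in the $(\e,\delta)$-DP definition, establishing the claimed inequality.
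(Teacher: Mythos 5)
Your high-level plan is right and matches the paper's: characterize $(\e,\delta)$-DP via the bad set where the privacy loss exceeds $\e$, set up the likelihood ratio by the translation $g \mapsto h = g - \Delta$, and apply a concentration inequality piece-wise after partitioning. Your derivation $R_T = \E_{h \mid \Omega_T(x')}\bigl[e^{-Z(h)/\lambda}\bigr]$ is correct, and you have correctly diagnosed that the whole difficulty is reducing the effective variance from $O(m)$ to $O(k)$.

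However, the partition you propose — conditioning on $i^\star = \argmax_{i \in T}(x'_i + h_i)$, $j^\star = \argmin_{j \notin T}(x'_j + h_j)$, and the gap — does not achieve this. After that conditioning the coordinates do decouple, but the variance of $Z$ is still controlled by $\sum_j \text{Var}(|h_j + \Delta_j| - |h_j|)$, and each coordinate $j$ whose mean $x'_j$ lies near the threshold contributes $\Theta(\Delta_j^2) = \Theta(1)$; in the worst case (near-equal $x'_j$'s) this is $\Theta(m)$, not $\Theta(k)$, so Bernstein would again force $\lambda = \Omega(\sqrt{m}/\e)$. Conditional independence is not the bottleneck; the bottleneck is that nothing in your partition forces the number of ``active'' coordinates to be $O(k)$. (A separate, smaller issue: $R_T$ is a conditional \emph{expectation} of $e^{-Z/\lambda}$, so a high-probability pointwise bound $Z \ge -\lambda\e$ does not directly yield $R_T \le e^\e$; you need to split the integral over the good/bad noise event, which you gesture at but do not quite state.)

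The paper's route is different in a way that matters. It conditions only on the identity $j$ and noise value $g_j$ of the $k$th-smallest noisy element. Then each remaining index $i$ is included independently with probability $q_i = G((x_j + g_j - x_i)/\lambda)$, reducing everything to a Poisson-binomial mechanism $\M(q)$ over Bernoulli indicators $Z_i$; the privacy loss becomes $\sum_i \zeta_i$ with $\zeta_i = Z_i \log(q'_i/q_i) + (1-Z_i)\log((1-q'_i)/(1-q_i))$ and variance $\sum_i q_i(1-q_i)\log^2(\cdots) = O(c^2\sum_i q_i)$ for the multiplicative closeness parameter $c = O(1/\lambda)$. The missing ingredient in your write-up is the step that caps $\sum_i q_i$: Lemma~\ref{lm:poisson_binomial} shows that if $\sum_i q_i > K \approx 3k$ then the probability of outputting a $k$-set at all is already below $\delta$, so one may restrict to $\sum_i q_i \le O(k)$, and only then Bennett's inequality gives variance $O(c^2 k)$, which with $c = \e / (C_1\sqrt{k\log(1/\delta)})$ yields the required $\delta$ tail. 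That use of the $k$-set cardinality constraint to truncate the effective support of the sum — rather than any cleverness in the geometric partition of $\Omega_T$ — is the idea your proposal is missing, and without it the Bernstein step fails.
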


We divide the event space by fixing the $k$th smallest
noisy element, say $j$, together with the noise value, say $g_j$. For
each partition, whether an element $i\neq j$ is selected by $\oneshot$
only depends on whether $x_i+g_i \leq x_j+g_j$, which happens with
probability $q_i = G((x_j + g_j - x_i)/\lambda)$. Here $G$ denotes
the cumulative distribution function of the standard Laplace distribution.  Hence, we consider the
following mechanism $\M$ instead: given $(q_1, \ldots, q_m)$ where
$0<q_i<1$, output a subset of indices where each $i$ is included with
probability $q_i$. In the following, we will first understand the
sensitivity of $q_i$ dependent on the change of $x_i$ and then show
that $\M$ is ``private'' with respect to the corresponding sensitivity
on $q$.
\begin{lemma}\label{lem:close}
For any $z,z'$, where $|z'-z|\leq 1$, $|G(z')-G(z)|\leq 2
|z'-z| G(z)(1-G(z))$.
\end{lemma}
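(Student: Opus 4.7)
The crucial ingredient is the identity $G'(t)=\tfrac12 e^{-|t|}=\min\{G(t),\,1-G(t)\}$ for the standard Laplace CDF: for $t\le 0$, $G(t)=\tfrac12 e^t$ coincides with its own derivative, while for $t\ge 0$, $1-G(t)=\tfrac12 e^{-t}$ coincides with minus its derivative. Assuming without loss of generality $z\le z'$, I would begin from the integral representation
\[
|G(z')-G(z)| \;=\; \int_z^{z'} G'(t)\,dt
\]
and apply the elementary inequality $\min(a,1-a)\le 2a(1-a)$ on $[0,1]$ (which is just $\max(a,1-a)\ge 1/2$) with $a=G(t)$ to obtain $G'(t)\le 2G(t)(1-G(t))$ pointwise. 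What remains is to show that $h(t):=G(t)(1-G(t))$ is comparable to $h(z)$ uniformly over $t\in[z,z']$.

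The function $h$ is even and unimodal with maximum $1/4$ at the origin, and on each monotone branch has the explicit exponential form $h(t)=\tfrac12 e^{-|t|}(1-\tfrac12 e^{-|t|})$. On either branch, direct manipulation of this formula shows $h(t)/h(z)$ is bounded by a small absolute constant whenever $|t-z|\le 1$ and $t,z$ lie on the same side of the origin. The only subtle sub-case is the ``straddling'' one, $z<0<z'$: here $|z'-z|\le 1$ forces $|z|\le 1$, so $h(z)\ge h(-1)$ is automatically bounded away from zero, keeping the ratio controlled. A conceptually cleaner alternative is to pass to the logit $\phi(t):=\log(G(t)/(1-G(t)))$; a short computation yields $\phi'(t)=G'(t)/(G(t)(1-G(t)))=1/\max(G(t),1-G(t))\le 2$, so $\phi$ is $2$-Lipschitz, and writing $G=\sigma\circ\phi$ for the logistic map $\sigma$ converts this Lipschitz bound directly into the multiplicative control $h(t)/h(z)\in[e^{-2|t-z|},\,e^{2|t-z|}]$, which suffices.

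The main obstacle is pinning down the constant $2$ sharply: the pointwise step $G'(t)\le 2h(t)$ is tight only at $t=0$, while the comparison $h(t)\le C\,h(z)$ for $|t-z|\le 1$ loses an extra multiplicative factor on the straddling branch. To hit the advertised constant rather than a small multiple of it, I expect the cleanest path is to split the integral $\int_z^{z'}G'(t)\,dt$ at the point of $[z,z']$ closest to the origin and bound each piece using the appropriate one-sided exponential tail of $G$, rather than bounding $G'$ by its sup over the whole interval. In any event, the structural content needed in Lemma~\ref{lm:oneshot_private} downstream is only that $|G(z')-G(z)|$ scales like $|z'-z|$ times $G(z)(1-G(z))$ with an absolute constant, and this identity-plus-case-analysis approach delivers exactly that.
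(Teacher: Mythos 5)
Your structural approach is the right one, and since the paper supplies no proof of Lemma~\ref{lem:close} there is no reference argument to compare it to; but let me flag two issues, one with your constant-tracking and one with the lemma itself.

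On your constant: the Lipschitz rate of $\log h$ with $h:=G(1-G)$ is $1$, not $2$. Indeed $\tfrac{d}{dt}\log h(t) = (1-2G(t))\,\phi'(t)$, and since $|1-2G|=2|G-\tfrac12|$ while $\phi'(t)=1/\max(G,1-G)=1/(\tfrac12+|G-\tfrac12|)$, the product is $\tfrac{2|G-\tfrac12|}{\tfrac12+|G-\tfrac12|}\le 1$. So $h(t)/h(z)\in[e^{-|t-z|},\,e^{|t-z|}]$, and the integral then gives $|G(z')-G(z)|\le 2h(z)\big(e^{|z'-z|}-1\big)\le 2(e-1)|z'-z|\,h(z)$ (using convexity of $e^s-1$ on $[0,1]$), a constant of about $3.44$ rather than the $\approx 14.8$ your $e^{\pm2|t-z|}$ bound would yield.

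On the target constant: the factor $2$ advertised in the lemma is in fact \emph{not} attainable under the hypothesis $|z'-z|\le 1$. Take $z=1$, $z'=0$: then $|G(z')-G(z)| = \tfrac12(1-e^{-1}) \approx 0.3161$, whereas $2\,|z'-z|\,G(z)\big(1-G(z)\big) = \big(1-\tfrac12 e^{-1}\big)e^{-1} \approx 0.3002$, which is strictly smaller. Numerically the sharp constant on $|z'-z|\le 1$ is about $2.13$, attained near $z\approx 0.9$, $z'=z-1$. So the gap your final paragraph describes cannot be closed; the integral-splitting refinement you propose will not reach $2$ because $2$ is false here. This is harmless downstream: Lemma~\ref{lm:oneshot_private} applies Lemma~\ref{lem:close} with $|z'-z|\le 2/\lambda$ for large $\lambda$, and in that regime the constant $2$ does hold (and in any case only $c$-closeness with $c=O(1/\lambda)$ and \emph{some} absolute constant is needed). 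But Lemma~\ref{lem:close} as stated should either have its constant bumped to, say, $3$, or its hypothesis tightened to $|z'-z|\le\tfrac12$. With that adjustment, your integral-plus-logit argument is a complete proof.
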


\begin{definition}[$c$-closeness for vectors]
For two vectors $q=(q_1,\ldots, q_m)$ and $q'=(q'_1,\ldots,q'_m)$, we
say $q,q'$ are {\em $c$-close} if for each $1\leq i\leq m$, $|q_i-q'_i|\leq
c q_i(1-q_i)$. 
\end{definition}
The following is the crucial lemma whose proof requires much technical work.
\begin{lemma}\label{lm:dp_q_new}
Assume $\e\leq\log(1/\delta)$ and $k\geq\log(1/\delta)$. There exists
constant $C_1>0$ such that if $q$ and $q'$ are $c$-close with $c \leq
\frac{\e}{C_1\sqrt{k\log(1/\delta)}}$, then for any set $S$ of
$k$-subsets of $\{0,1\}^m$,
$\P(\M(q)\in S) \le \ep{\e}\P(\M(q')\in S) + \delta$.
\end{lemma}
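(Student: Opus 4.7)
My plan is to apply the standard bad-event reduction for approximate differential privacy, and to use $c$-closeness to linearize the log-likelihood ratio so that Bernstein's inequality yields the right $\sqrt{k}$ scaling when we restrict attention to $k$-subsets.

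First, the reduction. Define $B = \{T \subseteq [m] : \log(\P_q(T)/\P_{q'}(T)) > \e\}$. Splitting $\P_q(\M(q) \in S) \le \P_q(\M(q) \in B) + e^\e \P_{q'}(\M(q) \in S)$ reduces the lemma to showing $\P_q(\M(q) \in B \cap S) \le \delta$. Since $S$ consists only of $k$-subsets, this is implied by $\P_q(\M(q) \in B,\, |\M(q)| = k) \le \delta$.

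Second, I would linearize the log-ratio. By $c$-closeness, write $q_i' = q_i + \eta_i q_i(1-q_i)$ with $|\eta_i| \le c$. Taylor expanding the logarithm and collecting the $\{i \in T\}$ and $\{i \notin T\}$ contributions gives
\[
\log \frac{\P_q(T)}{\P_{q'}(T)} = -\sum_{i=1}^m \eta_i (Z_i - q_i) + R(T),
\]
where $Z_i = \mathbf{1}[i \in T]$ and $R(T)$ is second order. Under $\P_q$, the $Z_i$'s are independent Bernoulli($q_i$), so the main term is a mean-zero sum of independent random variables, each bounded in magnitude by $c$, with total variance at most $c^2 \sum_i q_i(1-q_i)$.

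Third, the concentration step hinges on a case split on $\sum_i q_i$, which is what converts the $k$-subset restriction into a variance bound. If $\sum_i q_i \in [k/A, Ak]$ for a suitably large constant $A > 1$, then $\sum_i q_i(1-q_i) \le Ak$ and the second-order remainder is $O(c^2 k) = O(\e^2/\log(1/\delta))$; Bernstein's inequality applied with $c \le \e/(C_1\sqrt{k\log(1/\delta)})$ and $C_1$ chosen large enough then yields
\[
\P_q\!\left(\left|\sum_i \eta_i(Z_i - q_i)\right| > \e/2\right) \le \delta/2,
\]
and combined with the remainder bound this delivers $\P_q(\M(q) \in B) \le \delta$. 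Otherwise $\sum_i q_i \notin [k/A, Ak]$; in that regime the multiplicative Chernoff bound for the Binomial gives $\P_q(|\M(q)| = k) \le \exp(-\Omega(k)) \le \delta$, using the hypothesis $k \ge \log(1/\delta)$, so the desired inequality holds trivially because $S$ is contained in the family of $k$-subsets.

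The main obstacle is matching the noise scale $\sqrt{k\log(1/\delta)}/\e$ from Theorem~\ref{thm:oneshot} rather than the weaker $\sqrt{m\log(1/\delta)}/\e$ that a naive Hoeffding bound would give: only the case split above allows the variance of the linearized log-ratio to scale with $k$ rather than $m$. A secondary subtlety is the Taylor-expansion remainder $R(T)$, whose worst-case value can exceed its typical value; this is handled by noting that the quantities entering $R(T)$ are themselves concentrated under $\P_q$, so controlling them with probability $1 - \delta/2$ is enough to absorb into the overall $\delta$ slack.
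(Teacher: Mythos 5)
Your approach matches the paper's: both reduce to bounding the probability of the bad set where the log-likelihood ratio exceeds $\e$, split on the magnitude of $\sum_i q_i$ so that the restriction to $k$-subsets converts the variance of the (linearized) log-ratio from an $m$-scale quantity to an $O(c^2 k)$ quantity, and close with a Bernstein/Bennett-type bound whose exponent is $\Omega(\e^2/(c^2 k)) = \Omega(\log(1/\delta))$ when $c \le \e/(C_1\sqrt{k\log(1/\delta)})$. Two small simplifications you could make: the paper uses only the one-sided threshold $\sum_i q_i \le (1+2\sqrt{\log(m/\delta)/k})\,k = O(k)$ (the lower cutoff $k/A$ you impose buys nothing, since a small $\sum_i q_i$ only shrinks the variance and remainder bounds further), and once $|T| = k$ and $\sum_i q_i = O(k)$ the second-order remainder $R(T)$ admits the deterministic bound $|R(T)| \le O(c^2)\bigl(\sum_{i\in T}(1-q_i)^2 + \sum_{i\notin T}q_i^2\bigr) = O(c^2 k)$, so no separate concentration argument for $R(T)$ is needed.
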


We first show how these two lemmata imply Lemma~\ref{lm:oneshot_private} and then prove Lemma~\ref{lm:dp_q_new}.
\begin{proof}[Proof of Lemma~\ref{lm:oneshot_private}]
If $k\leq \log(m/\delta)$, then by Lemma~\ref{lem:oneshotk}, the
mechanism is $(\e,0)$ private. Assume $k\geq \log(m/\delta)$.  Denote
by $I_k$ the random variable of the $k$th smallest element in terms
of the noisy count. For any given $I_k=j$ and the noise $g_j = g$, we have
that $\P(i\in\oneshot(x)) = G((x_j + g - x_i)/\lambda):=q_i$. Set
$q = q(g) = (q_i)$ for $1\leq i\leq m$ and $i\neq j$. Write $S_j=\{s/\{j\}\;:\;
s\in S\mbox{ and }j\in s\}$. Then
\[
\P(\oneshot(x)\in S, I_k = j | g_j = g) = \P(M(q)\in S_j).
\]
Since $\|x-x'\|_{\infty}\leq 1$, we have that for
any $i$, $|(x_j+ g -x_i)/\lambda-(x'_j+ g -x'_i)/\lambda|\leq
2/\lambda$. By Lemma~\ref{lem:close}, $q$ and $q'$ are $4/\lambda$
close. Set $\lambda \geq 4C_1\sqrt{k\log(m/\delta)}/\e$, then $q$ and
$q'$ are $\frac{\e}{C_1\sqrt{k\log(m/\delta)}}$-close. Applying
Lemma~\ref{lm:dp_q_new} to $q$ and $q'$ with parameters $\e$ and
$\delta/m$, we have
\[\P(\oneshot(x)\in S, I_k = j | g_j = g)\leq \ep{\e} \P(\oneshot(x')\in S, I_k = j | g_j = g) + \delta/m\,.\]
\end{proof}

Now we prove Lemma~\ref{lm:dp_q_new}. Since $S$ consists of $k$-sets,
we first show that if $\sum_i q_j \gg k$, then $\P(\M(q)\in S)$ is
small. This can be done by applying the standard concentration bound. Write $K = (1+2\sqrt{\log(m/\delta)/k})k \leq 3k$ (recall we assume that $k\geq \log(m/\delta)$).

\begin{lemma}\label{lm:poisson_binomial}
Let $Z_1,\ldots, Z_m$ be $m$ independent Bernoulli random variables
with $\P(Z_i = 1) = q_i$. Suppose
\[\sum_{i=1}^m q_i \ge (1 + t)k\,,\]
for any $t > 0$. Then
\[\P(\sum_i Z_i \leq k) \le \exp\left( -(1+t)h(t/(1+t))k\right)\,,\]
where $h(u) = (1+u)\log(1+u) - u$. Consequently, by setting
$t=2\sqrt{\log(m/\delta)/k}$, we have that if $\sum_{i=1}^m q_i \ge
K$, then $\P(\sum_i Z_i \leq k)\le \delta$.
\end{lemma}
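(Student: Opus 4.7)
The plan is to apply the standard exponential-Markov (Chernoff) argument to the lower tail of $S=\sum_{i=1}^m Z_i$, optimize over the free parameter, and then substitute the specific choice of $t$ to derive the corollary. Concretely, for any $\theta>0$, Markov's inequality gives
\[
\P(S\le k)=\P\bigl(\ep{-\theta S}\ge \ep{-\theta k}\bigr)\le \ep{\theta k}\,\E\,\ep{-\theta S}=\ep{\theta k}\prod_{i=1}^m\bigl(1-q_i(1-\ep{-\theta})\bigr)\le \exp\!\Bigl(\theta k-(1-\ep{-\theta})\sum_i q_i\Bigr),
\]
where the last step uses $1-x\le \ep{-x}$ and independence of the $Z_i$'s. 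Plugging in the hypothesis $\sum_i q_i\ge (1+t)k$ (note the exponent is decreasing in $\sum_i q_i$ once we fix $\theta$ small enough that $1-\ep{-\theta}>0$) and optimizing in $\theta$ by setting the derivative to zero yields $\ep{\theta}=1+t$, i.e.\ $\theta=\log(1+t)$. Substituting back collapses the bound to $\exp\bigl(-k(t-\log(1+t))\bigr)$.

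Next I would show that this matches the claimed form with $h(u)=(1+u)\log(1+u)-u$. A direct computation gives
\[
(1+t)\,h\!\left(\frac{-t}{1+t}\right)\;=\;(1+t)\!\left[\frac{1}{1+t}\log\frac{1}{1+t}+\frac{t}{1+t}\right]\;=\;t-\log(1+t),
\]
so the Chernoff bound above is exactly $\exp\!\bigl(-(1+t)\,h(t/(1+t))\,k\bigr)$ in the author's notation (interpreting $h$ on the appropriate branch, or equivalently via the Legendre-transform form of the Bernoulli rate function that is standard in Bennett-type inequalities).

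For the ``Consequently'' statement, I would substitute $t=2\sqrt{\log(m/\delta)/k}$ into the clean bound $\exp(-k(t-\log(1+t)))$ and use the elementary inequality $t-\log(1+t)\ge \tfrac{t^2}{2(1+t)}$ (proved in one line by checking the derivative, or by Taylor's theorem with remainder). This yields an exponent of at least
\[
\frac{k\,t^2}{2(1+t)}\;=\;\frac{2\log(m/\delta)}{1+t}.
\]
The assumption $k\ge \log(m/\delta)$ forces $t\le 2$, so the exponent is at least $\tfrac{2}{3}\log(m/\delta)\ge \log(1/\delta)$ whenever $m$ is at least a small constant; adjusting the leading factor in $t$ (or absorbing it into the universal constants the paper already carries) gives $\P(S\le k)\le \delta$ as claimed.

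Honestly there is no real obstacle here: the whole lemma is a textbook Chernoff bound for a Poisson-binomial sum, and the only thing requiring even minor care is the translation between the clean optimized exponent $k(t-\log(1+t))$ and the particular ``rate function'' notation $(1+t)h(t/(1+t))k$ used in the statement. Everything else is routine.
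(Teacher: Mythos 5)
Your approach (direct Chernoff/MGF bound on the lower tail) is a genuinely different route from the paper's, which invokes Bennett's inequality (stated as Lemma~\ref{lm:bennett_ineq}) applied to the centered sum and then uses the monotonicity of $\sigma^2 h(tk/\sigma^2)$ in $\sigma^2$ together with the bound $\sigma^2\le(1+t)k$. Your Chernoff calculation is correct, but there is a real wrinkle in how you reconcile it with the stated exponent, and you should be honest about it rather than waving at ``the appropriate branch of $h$.'' Your optimized exponent is $k(t-\log(1+t))$, and as you compute, $t-\log(1+t)=(1+t)\,h\bigl(-t/(1+t)\bigr)$; this is \emph{not} equal to the $(1+t)\,h\bigl(t/(1+t)\bigr)$ appearing in the lemma. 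In fact a short check shows $t-\log(1+t) > (1+t)\,h\bigl(t/(1+t)\bigr)$ for all $t>0$ (they agree to second order as $t\to 0$ but diverge at third order; e.g.\ at $t=1$ you get $1-\log 2\approx 0.307$ versus $2h(1/2)\approx 0.216$). So your Chernoff bound is strictly \emph{stronger} than what the lemma asserts, and it therefore implies the lemma -- but you need to say exactly that, not that the two exponents coincide under a different sign convention. The right fix is one sentence: ``since $t-\log(1+t)\ge(1+t)h(t/(1+t))$ for $t>0$, the claimed bound follows a fortiori.''

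As a comparison of what each route buys: your direct Chernoff argument is more elementary (no appeal to Bennett's inequality), yields a tighter constant, and sidesteps an implicit step in the paper's proof, namely the claim $\sigma^2 = \sum q_i(1-q_i)\le (1+t)k$. That bound does not follow from the hypothesis $\sum q_i\ge(1+t)k$ alone; it holds after a stochastic-monotonicity reduction (decreasing each $q_i$ until $\sum q_i=(1+t)k$ only increases $\P(\sum Z_i\le k)$ while keeping $\sigma^2\le\sum q_i$), which the paper leaves unstated. Your approach handles that monotonicity transparently via the remark that the exponent is decreasing in $\sum q_i$ for fixed $\theta$. The paper's route, on the other hand, lands exactly on the $(1+t)h(t/(1+t))$ form stated in the lemma and keeps the argument uniform with the later use of Bennett's inequality in the proof of Lemma~\ref{lm:dp_q_new}, which is an exposition benefit. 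The ``Consequently'' step in your writeup is essentially the same elementary estimate as the paper intends; the need to absorb a small-$m$ regime into the hidden constants is shared by both arguments, so that caveat is fine.
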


The above lemma follows immediately from the classical Bennett's inequality stated as follows.
\begin{lemma}[Bennett's inequality, \cite{bennett}]
\label{lm:bennett_ineq}
Let $Z_1 \ldots, Z_n$ be independent random variables with all means being zero. In addition, assume $|Z_i| \le a$ almost surely for all $i$. Denoting by
\[
\sigma^2 = \sum_{i=1}^n\mathrm{Var}(Z_i),
\]
we have
\[
\P\left(\sum_{i=1}^n Z_i > t \right) \le \exp\left(-\frac{\sigma^2h(at/\sigma^2)}{a^2}\right)
\]
for any $t \ge 0$, where $h(u) = (1+u)\log(1+u) - u$.
\end{lemma}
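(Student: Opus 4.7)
The plan is to prove Bennett's inequality by the standard exponential moment (Chernoff) method, optimizing the free parameter so that the function $h(u) = (1+u)\log(1+u) - u$ emerges naturally from the Legendre-type transform of the cumulant generating function.

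First I would reduce the tail bound to a bound on the joint moment generating function. By Markov's inequality applied to $\exp(\lambda \sum_i Z_i)$ with any $\lambda > 0$,
\begin{equation*}
\P\Bigl(\sum_{i=1}^n Z_i > t\Bigr) \;\le\; e^{-\lambda t}\prod_{i=1}^n \E\bigl[e^{\lambda Z_i}\bigr],
\end{equation*}
using independence of the $Z_i$'s. Thus the whole problem reduces to controlling each single moment generating factor $\E[e^{\lambda Z_i}]$ in a way that captures both the boundedness $|Z_i| \le a$ and the variance information.

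Next I would establish the key one-variable bound: for any mean-zero $X$ with $|X|\le a$,
\begin{equation*}
\E\bigl[e^{\lambda X}\bigr]\;\le\; \exp\!\left(\frac{\mathrm{Var}(X)}{a^2}\bigl(e^{\lambda a}-1-\lambda a\bigr)\right).
\end{equation*}
The derivation is Taylor expansion: since $\E X = 0$, expanding $e^{\lambda X}$ gives $\E[e^{\lambda X}] = 1 + \sum_{k\ge 2}\lambda^k \E[X^k]/k!$. The boundedness $|X|\le a$ yields $|X|^k \le a^{k-2}X^2$ for $k\ge 2$, so $\E[X^k]\le a^{k-2}\mathrm{Var}(X)$, and summing the resulting series produces the factor $e^{\lambda a}-1-\lambda a$. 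Finally $1+x \le e^x$ exponentiates the bound. Multiplying over $i$ then gives
\begin{equation*}
\prod_{i=1}^n\E[e^{\lambda Z_i}]\;\le\;\exp\!\left(\frac{\sigma^2}{a^2}\bigl(e^{\lambda a}-1-\lambda a\bigr)\right),
\end{equation*}
so combined with the Markov step,
\begin{equation*}
\P\Bigl(\sum_i Z_i > t\Bigr)\;\le\;\exp\!\left(\frac{\sigma^2}{a^2}(e^{\lambda a}-1-\lambda a) - \lambda t\right).
\end{equation*}

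The last step is to optimize over $\lambda>0$. Differentiating the exponent in $\lambda$ and setting the derivative to zero yields $e^{\lambda a} = 1 + at/\sigma^2$, i.e., $\lambda a = \log(1+at/\sigma^2)$, which is automatically positive since $t\ge 0$. Substituting this optimal $\lambda$ back in, the exponent simplifies to $-(\sigma^2/a^2)\bigl[(1+at/\sigma^2)\log(1+at/\sigma^2) - at/\sigma^2\bigr] = -\sigma^2 h(at/\sigma^2)/a^2$, which is exactly the Bennett exponent. There is no real obstacle here: the two genuinely technical points are the $|X|^k\le a^{k-2}X^2$ trick that links higher moments to the variance, and recognizing that the Legendre-transform optimization of $\lambda$ produces the function $h$. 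Both are routine once the exponential moment strategy is in place.
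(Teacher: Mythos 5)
Bennett's inequality is cited in the paper from the literature (reference \cite{bennett}) and is not proved there, so there is no paper-internal proof to compare against. Your proof is the standard and correct Chernoff/exponential-moment argument: the single-variable MGF bound $\E[e^{\lambda X}]\le\exp\bigl(\mathrm{Var}(X)(e^{\lambda a}-1-\lambda a)/a^2\bigr)$ via the $|X|^k\le a^{k-2}X^2$ estimate and $1+x\le e^x$, followed by the optimization $\lambda a = \log(1+at/\sigma^2)$ which, after substitution, produces exactly $-\sigma^2 h(at/\sigma^2)/a^2$. All steps check out, including the positivity of the optimal $\lambda$ for $t\ge 0$ and the validity of the termwise bound $\lambda^k\E[X^k]\le\lambda^k a^{k-2}\mathrm{Var}(X)$ for $\lambda>0$.
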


With Lemma~\ref{lm:poisson_binomial}, we only need to consider the
case of $\sum_i q_i \leq K$.
But this is a more difficult case.
We first represent a set $s\subseteq\{1,\ldots,m\}$ by a binary vector
$z\in \{0, 1\}^m$. Write $\P_{q}(z)$ as
\begin{equation}\label{eq:p_expression}
\P_q(z) = \prod_{i:z_i = 1}q_i\prod_{i:z_i=0}(1-q_i)\,.
\end{equation}
Now our goal is to show that for any $S$ consisting of weight $k$
vectors in $\{0,1\}^m$,
\[\sum_{z\in S} \P_q(z) \leq \ep{\e}\sum_{z\in S}\P_{q'}(z) + \delta\,.\]
Denote by 
\[S^\ast = \{z\;:\; \P_q(z) \geq \ep{\e}\P_{q'}(z)\}\,.\]
The proof of Lemma~\ref{lm:dp_q_new} would be completed by showing
\begin{equation}\nonumber
\sum_{z\in S^\ast} \P_q(z) \leq \delta\,.
\end{equation}
By \eqref{eq:p_expression}, $x\in S^\ast$ if and only if that
\begin{equation}\label{eq:diff}
\prod_{i:z_i = 1}q_i\prod_{i:z_i=0}(1-q_i) \geq \ep{\e}\prod_{i:z_i = 1}q'_i\prod_{i:z_i=0}(1-q'_i)\,.
\end{equation}
Write $\Delta_i = q'_i - q_i$.  By the $c$-closeness of $q$ and $q'$,
we have that $|\Delta_i|\leq cq_i(1-q_i)$.  Taking logarithm of the
two sides and rearranging, (\ref{eq:diff}) is equivalent to
\[\sum_{i:z_i=1} \log(1+\Delta_i/q_i) + \sum_{i:z_i=0}\log(1 - \Delta_i/(1-q_i))\leq -\e\,.\]
To bound $\sum_{z\in S^\ast} \P_q(z)$, consider the independent $0,1$
random variables $Z_1, \ldots, Z_n$, where for each $i$, $Z_i = 1$
with probability $q_i$. Define
\[\zeta_i = Z_i\log(1+\Delta_i/q_i) + (1 - Z_i)\log(1 - \Delta_i/(1-q_i))\,.\]
Let $I$ be the indicator function. Then we have
\[\sum_{z\in S^\ast} \P_q(z)=\sum_z I\left(\sum_i \zeta_i \leq -\e\right) \P_q(z) = \P\left(\sum_i \zeta_i \leq -\e\right)\,,\]
where the last probability is over the distribution of
$Z_1,\ldots,Z_m$.

The rest of the proof is to prove that $\P(\sum_i \zeta_i \leq
-\e)\leq \delta$.  It is easy to check that $\zeta_1 + \ldots +
\zeta_m$ has mean
\[\sum_{i=1}^m \left( q_i\log(1+\Delta_i/q_i) + (1-q_i)\log(1-\Delta_i/(1-q_i))\right)\]
and variance
\[\sigma^2 = \sum_{i=1}^m q_i(1-q_i)\log^2\frac{1+\Delta_i/q_i}{1-\Delta_i/(1-q_i)}\,.\]
Before applying Bennett's inequality, we check the ranges of the centered random variables $\zeta_i - q_i\log(1+\Delta_i/q_i) -(1-q_i)\log(1-\Delta_i/(1-q_i))$, which are bounded in absolute value by
\[\max_{1 \le i \le m}\left|\log\frac{1 + \Delta_i/q_i}{1-\Delta_i/(1-q_i)}\right|\,.\]
Then Bennett's inequality asserts that for any $t \ge 0$,
\[\sum_{i=1}^m \zeta_i \ge \sum_{i=1}^m \left( q_i\log(1+\Delta_i/q_i) + (1-q_i)\log(1-\Delta_i/(1-q_i))\right) - t\]
with probability at least
\[1 - \exp\left(-\sigma^2h(At/\sigma^2)/A^2 \right)\,,\]
where $A=\max_{1 \le i \le m}|\log\frac{1 +
\Delta_i/q_i}{1-\Delta_i/(1-q_i)}|$. Then, taking
\[t = \e + \sum_{i=1}^m \left( q_i\log(1+\Delta_i/q_i) + (1-q_i)\log(1-\Delta_i/(1-q_i))\right)\,,\]
Bennett's inequality concludes that
\[\P(\sum_i \zeta_i\leq -\e) \le \exp\left(-\sigma^2h(At/\sigma^2)/A^2 \right)\,.\]
Hence, the case of $\sum_i q_i \leq K$ can be established by proving
\begin{equation}\label{eq:exp_bound_delta}
\frac{\sigma^2h(At/\sigma^2)}{A^2}  \ge \log\frac1{\delta}\,.
\end{equation}
The proof of \eqref{eq:exp_bound_delta} is deferred to the appendix. This proves Theorem~\ref{thm:oneshot} via first completing the proof of Lemma~\ref{lm:dp_q_new}.


\section{Discussion}
\label{sec:discussion}


In addition to all the usual scientific reasons for investigating FDR control, our interest in the problem was piqued by the following observation.  As suggested by the Fundamental Law of Information Recovery, exposure of data to a query causes some amount of privacy loss that eventually accumulates until privacy is lost.  This leads to the notion of a privacy budget, in which a limit on what is deemed to be a reasonable amount of privacy loss is established, and access to the data is terminated once the cumulative loss reaches this amount. This denial of access can be difficult to accept.  

In a sense the multiple comparisons problem describes a different way in which data can be ``used up,'' to wit, in accuracy. This connection between multiple hypothesis testing and differential privacy provided the seeds for an exciting recent result showing that differential privacy
protects against false discoveries due to {\em adaptive} data analysis~\cite{DworkFHPRR14}. Although often confounded, adaptivity is a different source of false discovery than multiple hypothesis testing; resilience to adaptivity does not address the problem studied in the current paper.


\section*{Acknowledgements}
Parts of the research were performed when W.~S.~was a research intern at Microsoft Research Silicon Valley. We would like to thank David Siegmund, Emmanuel Cand\`es, and Yoav Benjamini for helpful discussions.

\bibliographystyle{abbrv}
\bibliography{ref}

\appendix
\section{Technical Proofs}
\label{sec:technical-proofs}

\begin{proof}[Proof of Lemma~\ref{lm:submartingale}]
The proof is similar to Example 5.6.1 in \cite{durrett}.  By scaling,
assume that $\xi_i$ are exponential random variables with parameter 1,
i.e, $\E \xi_i = 1$. Note that $W_j$ is measurable with respect to
$\mathcal{F}_j$. In the proof, we first consider the conditional
expectation $\E( W_j^{-1}|\mathcal{F}_{j+1})$, then return to $\E(
W_j|\mathcal{F}_{j+1})$ by applying Jensen's inequality. Specifically, note that
\[
\E \left[ \frac{\xi_{l}}{j T_{m+1}}\big{|}\mathcal{F}_{j+1} \right] = \frac{1}{j T_{m+1}} \E(\xi_l |\mathcal{F}_{j+1}),
\]
since $T_{m+1}$ is measurable in $\mathcal{F}_{j+1}$. Next, observe that by symmetry we get
\[
\E ( \xi_{l}|\mathcal{F}_{j+1}) = \E ( \xi_{k}|\mathcal{F}_{j+1})
\]
for any $l, k \le j + 1$. Combining the last two displays gives
\begin{align*}
\E\left[ \frac{T_j}{j T_{m+1}}\big{|}\mathcal{F}_{j+1} \right] &= \E \left[ \frac{T_j}{(j+1) T_{m+1}} \big{|}\mathcal{F}_{j+1} \right] + \sum_{l=1}^j \E \left[ \frac{\xi_l}{j(j+1) T_{m+1}}\big{|}\mathcal{F}_{j+1} \right] \\
&= \E \left[ \frac{T_j}{(j+1) T_{m+1}}\big{|}\mathcal{F}_{j+1} \right] + \sum_{l=1}^j \E \left[ \frac{\xi_{j+1}}{j(j+1) T_{m+1}}\big{|}\mathcal{F}_{j+1} \right]\\
&= \E \left[ \frac{T_{j+1}}{(j+1) T_{m+1}}\big{|}\mathcal{F}_{j+1} \right] = \frac{T_{j+1}}{(j+1) T_{m+1}}\,.
\end{align*}
To complete, note that Jensen's inequality asserts that
\[
\E( W_j|\mathcal{F}_{j+1}) \geq \frac{1}{\E( W_j^{-1}|\mathcal{F}_{j+1})} = \frac{(j+1) T_{m+1}}{T_{j+1}} = W_{j+1}\,,
\]
as desired.
\end{proof}

\begin{proof}[Proof of Lemma~\ref{lm:fdr_1_term}]
The order statistic $U_{(1)}$ follows from $\mathrm{Beta}(1, m)$, whose density is $m(1-x)^{m-1}$ at the value $x$. Thus,
\begin{align*}
\E \left[ \min \left\{ \frac{1}{\ceil{m U_{(1)}/q}}, 1 \right\} \right] &\le \E \left[ \min \left\{ \frac{q}{m U_{(1)}}, 1 \right\} \right]\\
& = \P\left( U_{(1)} < q/m \right) + \int_{\frac{q}{m}}^1 \frac{q(1-x)^{m-1}}{x} \d x\,.
\end{align*}
In this display, the term $\P(U_{(1)} < q/m)$ is simply equal to $1 - (1-q/m)^m \le q$, and the integral satisfies
\begin{align*}
\int_{\frac{q}{m}}^1 \frac{q(1-x)^{m-1}}{x} \d x &\le q\int^m_q \frac{(1-y/m)^{m-1}}{y} \d y \\
&\le q\int^1_q \frac1{y} \d y + q\int^m_1 \ep{-y/2} \d y \\
&\le q\log\frac1{q} + \frac{2q}{\sqrt{\ep{1}}}\,.
\end{align*}
\end{proof}

\begin{proof}[Proof of Lemma \ref{lm:poisson_binomial}]
Bennett's inequality asserts
\[
\P(\sum_i Z_i \leq k) = \P\left(\sum_i Z_i - \sum_i q_i \leq -tk\right) \le \ep{-\sigma^2 h(tk/\sigma^2)}, 
\]
where $\sigma^2 = \sum_{i=1}^m q_i(1-q_i) \le (1+t)k$. Invoking the monotonically decreasing property of $\sigma^2 h(tk/\sigma^2)$ as a function of $\sigma^2$, we get
\[\P(\sum_i Z_i \leq k) \le \ep{-\sigma^2 h(tk/\sigma^2)} \le \exp\left( -(1+t)h(t/(1+t))k\right)\,.\]
\end{proof}

We now aim to bound $t, A$, and $\sigma$ using the fact $|\Delta_i| \le c
q_i(1-q_i)$. First, using $u-u^2 \leq \log(1+u)\leq u$ for $|u|\leq
1/2$, we have that
\begin{align*}
&\left|\sum_{i=1}^m \left( q_i\log(1+\Delta_i/q_i) + (1-q_i)\log(1-\Delta_i/(1-q_i))\right)\right|\\
\leq{}&\sum_i \left|q_i(\Delta_i/q_i + (\Delta_i/q_i)^2)+(1-q_i)(-\Delta_i/(1-q_i) + (\Delta_i/(1-q_i))^2)\right|\\
={}&\sum_i (q_i (\Delta_i/q_i)^2 + (1-q_i)(\Delta_i/(1-q_i))^2)\\
={}&\sum_i (q_i (\Delta_i/q_i)^2 + (1-q_i)(\Delta_i/(1-q_i))^2)\\
\leq{}&\left(c^2\sum_i(q_i(1-q_i)^2+q_i(1-q_i)^2\right)\quad\mbox{by $|\Delta_i|\leq c q_i(1-q_i)$}\\
\leq{}&c^2 \sum_i q_i \leq O(c^2 k)\quad\mbox{by $\sum_i q_i\leq K\leq 3k$}\,.
\end{align*}
Hence, we get
\begin{equation}\label{eq:t}
|t-\e|=O(c^2k)
\end{equation}
and
\[
\left|\log\frac{1+\Delta_i/q_i}{1-\Delta_i/(1-q_i)}\right|\leq 
\left|\log\frac{1+c(1-q_i)}{1-cq_i}\right| \leq \frac{c}{1-cq_i}=O(c).
\]
Combining these results gives
\begin{equation}\label{eq:A}
A = O(c)\,.
\end{equation}
In a similar way, we have
\begin{align}
\sigma^2 &= \sum_{i=1}^m q_i(1-q_i)\log^2\frac{1+\Delta_i/q_i}{1-\Delta_i/(1-q_i)}\nonumber\\
&\leq \sum q_i(1-q_i)O(c^2)\nonumber\\
&=O(c^2 k)\,.\label{eq:sigma}
\end{align}
Since $uh(a/u)$ is a decreasing function in $u$, from \eqref{eq:sigma} it follows that for any sufficiently large $C_2$
\[
\sigma^2h(At/\sigma^2)\geq C_2c^2k h(At/(C_2c^2k))\,,
\]
which, together with \eqref{eq:t} and \eqref{eq:A}, gives
\[At/(C_2c^2k) = O(c(\e+c^2k)/(C_2c^2 k)) = \frac1{C_2}O(\e/(ck)+c))\,.\]
For any sufficiently large $C_1$, set $c=\e/(C_1\sqrt{k\log(1/\delta)})$ and choose a sufficiently large constant $C_2$. Recognizing $k\geq \log(1/\delta)$, we have
\[At/(C_2c^2K) \leq 0.5\,.\]
Making use of the fact that $h(u)=\Omega(u^2)$ for $u\leq 0.5$, we have that
\[\frac{\sigma^2h(At/\sigma^2)}{A^2}\geq C_2 c^2k h(At/(C_2c^2k))/A^2 =\Omega(t^2/(c^2 k))\,.\]
Owing to $\e\leq\log(1/\delta)$ and $c=\e/(C_1\sqrt{k\log(1/\delta)})$, we set $C_1$ large enough such that $t=\Omega(\e)$. Consequently, we get
\[\frac{\sigma^2h(At/\sigma^2)}{A^2} =\Omega(\e^2/(c^2 k)) = \Omega(\log(1/\delta))\,.\]
This proves (\ref{eq:exp_bound_delta}) and completes the proof of
Lemma~\ref{lm:dp_q_new}. Thus, Theorem~\ref{thm:oneshot} is proved.


\end{document}